\newcommand{\be}{\begin{equation}}
\newcommand{\ee}{\end{equation}}
\newtheorem{theorem}{Theorem}[section]
\newtheorem{lemma}[theorem]{Lemma}
\newtheorem{proposition}[theorem]{Proposition}
\theoremstyle{definition}
\newtheorem{definition}[theorem]{Definition}
\newtheorem{example}[theorem]{Example}
\theoremstyle{remark}
\newtheorem{remark}[theorem]{Remark}
\theoremstyle{conjecture}
\numberwithin{equation}{section}
\begin{document}

\title{R-matrix realization of two-parameter quantum group $U_{r,s}(\mathfrak{gl}_n)$}
\author{Naihuan Jing, Ming Liu$^*$}
\address{NJ: Department of Mathematics, North Carolina State University, Raleigh, NC 27695, USA
and School of Mathematical Sciences, South China University of Technology, Guangzhou 510640, China}
\address{ML: Chern Institute of Mathematics, Nankai University, Tianjin 30071, China}

\thanks{{\scriptsize
\hskip -0.4 true cm MSC (2010): Primary: 17B37; Secondary: 20G42, 16T25.
\newline Keywords: Quantum groups, determinants, Casimir elements, Yang-Baxter equations\\
%Received: 22 June 2014, Accepted: \\
$*$Corresponding author.
}}

\maketitle

\begin{abstract} We provide a Faddeev-Reshetikhin-Takhtajan's RTT approach to the quantum group
$Fun(GL_{r,s}(n))$ and the quantum enveloping algebra $U_{r,s}(\mathfrak{gl}_n)$
corresponding to the two-parameter $R$-matrix. We prove
that the quantum determinant $det_{r,s}T$ is a quasi-central element in $Fun(GL_{r,s}(n))$
generalizing earlier results of Dipper-Donkin and Du-Parshall-Wang. The explicit
formulation provides an interpretation of the deforming parameters,
and the quantized algebra $U_{r,s}(R)$
is identified to $U_{r,s}(\mathfrak{gl}_n)$ as the dual algebra. We then
construct $n-1$ quasi-central elements in $U_{r,s}(R)$ which are
analogues of higher Casimir elements in $U_q(\mathfrak{gl}_n)$.

\end{abstract}

\section{Introduction}
Quantum groups were discovered as certain noncommutative and noncocommutative Hopf algebras by
Drinfeld \cite{D3} and Jimbo \cite{J}. The standard definition of a quantum group $U_q(\mathfrak{g})$ is given as a $q$-deformation
of universal enveloping algebra of a simple Lie algebra $\mathfrak{g}$ generated by the Chevalley generators under
the Serre relations based on the data coming from the corresponding Cartan matrix.

Faddeev,
Reshetikhin and Takhtajan \cite{FRT} gave another realization of the
quantum groups using the solutions $R$ of the
Yang-Baxter equation:
\begin{equation}\label{YB Eq}
R_{12}R_{13}R_{23}=R_{23}R_{13}R_{12}.
\end{equation}
They also studied the quantum function algebra $Fun(GL_{q}(n))$ for Lie group $GL_n$ in \cite{FRT}.
In particular, the quantum determinant $qdet(T)$ for the quantum function algebra $Fun(GL_{q}(n))$ was introduced and proved to be a special central element.
Furthermore, the authors studied the algebra $U(R)$ as a dual algebra of the quantum
function algebra and proved that $U(R)$ is isomorphic to $U_q(\mathfrak{g})$. This approach can systematically provide a complete
set of generators for the center of the quantum enveloping algebra $U_q(\mathfrak{sl}_n)$:
\begin{equation}
c_k=\sum_{\sigma,\sigma'\in S_n}(-q)^{l(\sigma)+l(\sigma')}l^{+}_{\sigma(1),\sigma'(1)}...l^{+}_{\sigma(k),\sigma'(k)}
l^{-}_{\sigma(k+1),\sigma'(k+1)}...l^{-}_{\sigma(n),\sigma'(n)},
\end{equation}
where $l^{\pm}_{ij}$ are the $q$-analogs of the Weyl root vectors corresponding to the roots
$\pm(\epsilon_i-\epsilon_j)$.

Two-parameter general linear and special linear quantum groups were introduced by Takeuchi \cite{T} in 1990. A special case of the quantum coordinate algebra
 was the subject of study in relationship with the Schur
$q$-algebra \cite{DD, DPW} where a quantum determinant
was shown to be quasi-central. The two-parameter quantum enveloping algebras have later gained full attention after
 Benkart and Witherspoon's works \cite{BW1, BW2} on the $(r,s)$-deformed quantum algebras associated with
 $\mathfrak{gl}_n$ and $\mathfrak{sl}_n$, where the quantum R-matrix and
the Drinfeld doubles were obtained (see also \cite{BGH}). They further showed that the representation theory
of the two-parameter quantum enveloping algebras can be similarly developed as
the one-parameter case. In this context they established a two-parameter quantum Schur-Weyl duality
(see also \cite{JL}). In \cite{JZ} an interpretation of
the two parameters was demonstrated through a combinatorial
realization of the affine analog \cite{HRZ} of the quantum linear algebra.

The quantum inverse scattering method has been very successful in studying various quantum groups \cite{CP}
such as the Yangian algebras \cite{MNO, M} and finite W-algebras \cite{BR}. For quantum matrix algebras
the method has been particularly useful \cite{FRT, HIOPT}.
In this paper we will generalize the Faddeev-Reshetikhin-Takhatajan (FRT)
approach \cite{FRT} to the two-parameter case by studying the two-parameter
quantum linear groups and the quantum general/special linear algebras as
dual Hopf algebras. The anti-symmetric tensors \cite{JL} associated to
the R-matrix \cite{BW3} help one to define 2-parameter quantum determinant
$det_{r,s}T$ and the quantum Casimir elements.
If $t_{ij}$ are the generators
of the 2-parameter quantum monoid $Mat_{r,s}(n)$, Using
the FRT method we
obtain that the row determinant and the column determinant are given respectively by
\begin{align*}
det_{r,s}T&=\sum_{\sigma\in Sn}(-s)^{l(\sigma)}t_{\sigma(1),1}\cdots t_{\sigma(n),n}\\
&=\sum_{\sigma\in Sn}(-r)^{-l(\sigma)}t_{1, \sigma(1)}\cdots t_{n, \sigma(n)}.
\end{align*}
This means that the two parameters $r, s$ exactly correspond to column permutations and row
permutations respectively.

 By a similar method to FRT approach, we are able to
define naturally and systematically the Weyl root vectors or Gauss generators for $U_{r,s}(\mathfrak{sl}_n)$
and their commutation relations can be compactly written as matrix equations
in terms of the 2-parameter R-matrices. Moreover we can obtain the explicit
formulae for the higher Casimiar elements and show that they are
quasi-central. In studying representation theory of 2-parameter quantum algebras,
it seems to be more naturally to use the quasi-commutative Casimir elements
instead of using the central elements given the similarity of the
two representation theories.

The paper is organized as follows. In section 2
we study the two-parameter quantum function algebra $Fun(GL_{r,s}(n))$ and its two-parameter
quantum determinant. Different from the one-parameter case, we prove that
the two-parameter
quantum determinant $det_{r,s}(T)$ is not a central element but a quasi-central element.
In section 3, we study the algebra $U(R)$ as the dual of $Fun(GL_{r,s}(n))$,
and analogue to the one-parameter case, we give $n-1$ quasi-central elements $c_k$ in $U(R)$
which generalize the classical higher Casimir elements in the center of the universal enveloping algebra.
In section 4, we study the commutation relations between the Cartan-Weyl generators of $U(R)$ by using the Gauss decomposition of the matrix $L^{\pm}$
inside $U(R)$.
In section 5, we show that $U_{r,s}(\mathfrak{gl}_n)$ in Drinfeld-Jimbo realization
is isomorphic to $U(R)$ in FRT realization.

\section{Two-parameter quantum algebra $Fun(GL_{r,s}(n))$}

In this section we study the two-parameter quantum ccordinate
algebra $Fun(GL_{r,s}(n))$ associated to the general linear group $GL_n$
using the FRT method.

Let $V=\mathbb C^n$, the $n$-dimensional complex space with the basis
of column vectors $e^i=(0, \cdots, 1, \cdots, 0)^T$.
Let $e_{ij}$ be the unit matrices acting on $V$ so that $e_{ij}e^k=\delta_{jk}e^i$. The
two-parameter quantum $R$-matrix $\in End(V\otimes V)$ is given as
\begin{equation}\label{R-matrix}
\begin{aligned}
R&=\sum_{ijkl}R_{kl}^{ij}e_{ik}\otimes e_{jl}\\
&=s\sum_{i=1}^{n}e_{ii}\otimes e_{ii}+rs\sum_{i>j}e_{ii}\otimes e_{jj}+
\sum_{i<j}e_{ii}\otimes
e_{jj}+(s-r)\sum_{i>j}e_{ij}\otimes e_{ji},
\end{aligned}
\end{equation}
which satisfies the well-known Yang-Baxter equation:
\begin{equation*}
R_{12}R_{13}R_{23}=R_{23}R_{13}R_{12},
\end{equation*}
where $R_{12}=R\otimes 1$, and $R_{ij}$ acts on the $i$th and $j$th
copies of $V$ inside $V^{\otimes 3}$.
\begin{remark}
When $s=q=r^{-1}$, the R-matrix (\ref{R-matrix}) is the one-parameter R-matrix considered in \cite{FRT}.
We remark that the R-matrix can not be re-scaled to a one-parameter R-matrix.
\end{remark}

We set
\begin{equation}\label{braided R-matrix}
\widehat{R}=PR=s\sum_{i=1}^{n}e_{ii}\otimes e_{ii}+rs\sum_{i<j}e_{ij}\otimes e_{ji}+
\sum_{i>j}e_{ij}\otimes
e_{ji}+(s-r)\sum_{i<j}e_{ii}\otimes e_{jj},
\end{equation}
where $P$ is the permutation operator $P(u\otimes v)=v\otimes u$. Then the Yang-Baxter equation (\ref{YB Eq}) is equivalent to the braid relation
\begin{equation}\label{e:braid}
\widehat{R}_{12}\widehat{R}_{23}\widehat{R}_{12}=\widehat{R}_{23}\widehat{R}_{12}\widehat{R}_{23}.
\end{equation}
Furthermore $\widehat{R}$ satisfies the following Hecke relation:
$$(\widehat{R}-s)(\widehat{R}+r)=0.$$
For $r+s\neq 0$, $\widehat{R}$ has the following spectral
decomposition
$$\widehat{R}=sP^{+}-rP^{-},$$
where $P^{\pm}$ are the idempotents such that $P^+P^-=P^-P^+=0, P^++P^-=1$ and they are given by
\begin{align*}
P^{+}&=\frac{\widehat{R}+r}{r+s},\\
P^{-}&=\frac{-\widehat{R}+s}{r+s}.
\end{align*}

Following \cite{FRT} we define the quantum algebra $A(R)$ of the matrix monoid for the two-parameter R-matrix  (\ref{R-matrix}).
\begin{definition} The algebra
$A(R)$ is an associative algebra generated by $t_{ij}$,
$1\leq i,j\leq n$ subject to the quadratic relations defined by
\begin{equation}\label{defining relations of A(R)}
RT_{1}T_2=T_2T_{1}R,
\end{equation}
where $T_{1}=\sum_{ij}t_{ij}\otimes e_{ij}\otimes 1$, $T_2=\sum_{ij}t_{ij}\otimes 1\otimes e_{ij}
\in A(R)\otimes End(V^{\otimes 2})$.
\end{definition}

A representation of $A(R)$ is a linear map $T\longrightarrow A\in End(W)$ such that
$RA_1A_2=A_2A_1R$ on $End(W\otimes V^{\otimes 2})$. 

\begin{proposition}\cite{FRT}
The algebra $A(R)$ is a bialgebra with comultiplication
\begin{equation*}
\Delta(t_{ij})=\sum_{k} t_{ik}\otimes t_{kj},~~~~~~~~~~~~i,j=1,...,n,
\end{equation*}
and co-unit $\epsilon$
\begin{equation*}
\epsilon(t_{ij})=\delta_{ij},~~~~~~~~~~~~~i,j=1,...,n.
\end{equation*}
\end{proposition}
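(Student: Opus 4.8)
The plan is to verify directly that the comultiplication $\Delta$ and counit $\epsilon$ given by the stated formulas are well-defined algebra homomorphisms $A(R)\to A(R)\otimes A(R)$ and $A(R)\to\mathbb{C}$, and that they satisfy the coassociativity and counit axioms. Coassociativity $(\Delta\otimes\mathrm{id})\Delta=(\mathrm{id}\otimes\Delta)\Delta$ and the counit identity $(\epsilon\otimes\mathrm{id})\Delta=\mathrm{id}=(\mathrm{id}\otimes\epsilon)\Delta$ are immediate on the generators $t_{ij}$ — both sides send $t_{ij}$ to $\sum_{k,l}t_{ik}\otimes t_{kl}\otimes t_{lj}$ and to $t_{ij}$ respectively — and since both $\Delta$ and $\epsilon$ are to be algebra maps, checking on generators suffices. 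So the real content is that these two maps are compatible with the defining relations \eqref{defining relations of A(R)}, i.e. that they descend from the free algebra on the $t_{ij}$ to the quotient $A(R)$.

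The key step is therefore the following. In the free algebra, set $\widetilde{T}=\sum_{ij}\bigl(\sum_k t_{ik}\otimes t_{kj}\bigr)\otimes e_{ij}$, which is the matrix whose $(i,j)$ entry is $\Delta(t_{ij})$; concretely $\widetilde{T}=T_{[1]}T_{[2]}$ in the standard "leg" notation, where $T_{[1]}$ has its matrix part in $\mathrm{End}(V)$ and coefficients in the first tensor factor of $A(R)\otimes A(R)$, and $T_{[2]}$ likewise in the second factor; the two sets of coefficients commute. I must check that $\widetilde{T}$ again satisfies $R\widetilde{T}_1\widetilde{T}_2=\widetilde{T}_2\widetilde{T}_1 R$ in $\bigl(A(R)\otimes A(R)\bigr)\otimes\mathrm{End}(V^{\otimes2})$. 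Writing this out, $R\widetilde{T}_1\widetilde{T}_2 = R\,T^{(1)}_1T^{(2)}_1\,T^{(1)}_2T^{(2)}_2 = R\,T^{(1)}_1T^{(1)}_2\,T^{(2)}_1T^{(2)}_2$, using that first-factor and second-factor coefficients commute. Now apply the relation \eqref{defining relations of A(R)} for the first copy of $A(R)$ to move $R$ past $T^{(1)}_1T^{(1)}_2$, obtaining $T^{(1)}_2T^{(1)}_1\,R\,T^{(2)}_1T^{(2)}_2$, then apply it for the second copy to get $T^{(1)}_2T^{(1)}_1\,T^{(2)}_2T^{(2)}_1\,R = \widetilde{T}_2\widetilde{T}_1R$, as required. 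For the counit, one checks that the matrix $\epsilon(T)=I$ satisfies $RII=IIR$ trivially, so $\epsilon$ respects the relations. Hence both maps are well-defined.

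The only genuine obstacle is bookkeeping: making the "leg" notation precise enough that the manipulation $R T^{(1)}_1 T^{(2)}_1 T^{(1)}_2 T^{(2)}_2 = R T^{(1)}_1 T^{(1)}_2 T^{(2)}_1 T^{(2)}_2$ is visibly justified — i.e. that the entries of $T^{(1)}$ (lying in $A(R)\otimes 1$) commute with those of $T^{(2)}$ (lying in $1\otimes A(R)$) inside $A(R)\otimes A(R)$, while the $\mathrm{End}(V^{\otimes2})$ factors are untouched — and similarly that $R$ (which lives purely in $\mathrm{End}(V^{\otimes2})$) commutes with all the scalar coefficients. Once the conventions are fixed this is automatic, and indeed this is exactly the argument of \cite{FRT}; the two-parameter form of $R$ plays no role, since only the quadratic relation \eqref{defining relations of A(R)} is used. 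I would note explicitly that the same computation shows $A(R)$ is in fact the universal bialgebra coacting on the quantum plane defined by $\widehat{R}$, but for the proposition itself only the verification above is needed.
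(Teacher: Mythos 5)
Your argument is correct and is exactly the standard FRT verification: the paper gives no proof of its own (it cites \cite{FRT}), and the cited proof proceeds precisely as you do, by checking that $\Delta(T)=T^{(1)}T^{(2)}$ again satisfies the RTT relation using the commutativity of the two tensor factors, and that $\epsilon(T)=I$ does so trivially. Nothing further is needed.
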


Suppose there are elements $t'_{ij}$, $1\leq i, j\leq n$ such that
\begin{equation*}
\sum_{k}t'_{ik}t_{kj}=\sum_{k}t_{ik}t'_{kj}=\delta_{ij}\cdot 1.
\end{equation*}
Then the algebra $Fun(GL_{r,s}(n))$ is then defined to be the associative
algebra generated by $t_{ij}, t'_{ij}$ subject to aforementioned relations involving
the generators. Clearly one has $T'T=TT'=1$ in $Fun(GL_{r,s}(n))$.
Moreover $Fun(GL_{r,s}(n))$ is a Hopf algebra with the antipode given by
$S(t_{ij})=t'_{ij}$. We remark that the elements $t'_{ij}$ will be
shown to exist once we prove that the quantum determinant is a
regular element in the ring $Fun(GL_{r,s}(n))$ as the elements
$t_{ij}'$ can be solved by the quantum Cramer rule.

\begin{example} The two-parameter quantum group $Fun(GL_{r,s}(2))$
is generated by $t_{ij}, det^{\pm 1}_{r,s}$ subject to the relations
\begin{align*}
t_{11}t_{12}&=r^{-1}t_{12}t_{11}, \quad t_{11}t_{21}=st_{21}t_{11}\\
t_{21}t_{22}&=r^{-1}t_{21}t_{22}, \quad t_{12}t_{22}=st_{22}t_{12}\\
t_{12}t_{21}&=rst_{21}t_{12},\\
t_{11}t_{22}&-t_{22}t_{11}=(s-r)t_{21}t_{12},\\
det_{r,s}det_{r,s}^{-1}&=det_{r,s}^{-1}det_{r,s}=1,\\
(det_{r,s})t_{ij}&=(rs)^{i-j}t_{ij}(det_{r,s}),
\end{align*}
where
\begin{align*}
det_{r,s}&=t_{11}t_{22}-st_{12}t_{21}=t_{22}t_{11}-rt_{12}t_{21}\\
&=t_{11}t_{22}-r^{-1}t_{12}t_{21}.
\end{align*}
\end{example}

We now turn to the general quantum determinant of $Fun(GL_{r,s}(n))$ using the quantum inverse scattering
method via anti-symmetric tensors (cf. \cite{JL}).

Let $V^*$ be the dual
space of $V$ spanned by the row vectors $e_i=e_i^T$.
The endomorphism space $End(V)$ is identified with $V\otimes V^*$,
which has the natural basis elements $e_{ij}=e^ie_j:=e^i\otimes e_j$.

Before introducing the quantum determinant, we introduce the $(r,s)$-deformed antisymmetric
tensors $|\varepsilon\rangle\doteq|\varepsilon_n\rangle\in V^{\otimes n}$, $\langle\varepsilon|\doteq\langle\varepsilon_n|\in {(V^*)}^{\otimes n}$
as follows:
\begin{align}\label{e:normalization}
\langle\varepsilon|\varepsilon\rangle&=1,\\
\label{Anti-Sym1}
\langle\varepsilon|P^{+}_{k,k+1}&=0,~~~~~~~~~~~~~~k=1,2,...,n-1,\\
\label{Anti-Sym2}
P^{+}_{k,k+1}|\varepsilon\rangle&=0,~~~~~~~~~~~~~~~k=1,2,...,n-1.
\end{align}
where $P^+_{k, k+1}=1^{\otimes(k-1)}\otimes P^+\otimes 1^{\otimes(n-k-1)}$ with $P^{+}$
at the $(k, k+1)$-position.
It is straightforward to prove that the above equations have a unique solution
up to normalization, and the $(r,s)$-deformed antisymmetric tensors
can be chosen as follows:
\begin{equation}
\langle\varepsilon|=\frac{1}{[n]_{r,s}!}\sum_{\sigma\in S_n}(-s)^{l(\sigma)}e_{\sigma(1)}\otimes\cdots \otimes e_{\sigma(n)},
\end{equation}

\begin{equation}
|\varepsilon\rangle=r^{\frac{n(n-1)}{2}}\sum_{\sigma\in S_n}(-r)^{-l(\sigma)}e^{\sigma(1)}\otimes\cdots \otimes e^{\sigma(n)},
\end{equation}
where, $[k]_{r,s}=\frac{s^k-r^k}{s-r}$, $[k]_{r,s}!=[1]_{r,s}[2]_{r,s}\cdots [k]_{r,s}$, and $e_i$ the i-th row vector, $e^j$ the j-th
column vector.

In view of the RTT relations (\ref{defining relations of A(R)}) of $A(R)$ and definition of $P^{+}$,
we see that $\langle\varepsilon|T_1\cdots T_n$ and $T_1\cdots T_n|\varepsilon\rangle$
satisfy the following equations respectively:
\begin{align*}
\langle\varepsilon|T_1\cdots T_nP^{+}_{k,k+1}&=0,~~~~~~~~~~~~~~k=1,2,...,n-1,\\
P^{+}_{k,k+1}T_1\cdots T_n|\varepsilon\rangle&=0,~~~~~~~~~~~~~~~k=1,2,...,n-1.
\end{align*}

From the uniqueness of the solutions to Eq. (\ref{Anti-Sym1})--(\ref{Anti-Sym2}) it follows that
\begin{align}\label{Eq_5.7}
\langle\varepsilon|T_1\cdots T_n&=c_T\langle\varepsilon|,\\
\label{Eq_5.8}
T_1\cdots T_n|\varepsilon\rangle&=c^T|\varepsilon\rangle,
\end{align}
where $c_T, c^T \in Fun_{r,s}(GL_n)$.
Subsequently it follows from (\ref{e:normalization}) that
\begin{equation}\label{e:det}
c_T=c^T=\langle\varepsilon|T_1\cdots T_n|\varepsilon\rangle.
\end{equation}

\begin{definition} The quantum determinant of $Fun(GL_{r,s}(n))$ is defined to be the
normalization factor
$$det_{r,s}T=\langle\varepsilon|T_1\cdots T_n|\varepsilon\rangle.$$
\end{definition}

To find an explicit formula of $det_{r,s}T$ we
consider the rank one tensor $A_n=|\varepsilon\rangle\langle\varepsilon|$
acting on $(\mathbb{C}^n)^{\otimes n}$. It is easily seen that
$A_n^2=A_n$. The following result gives some compact form
of various commutation relations in $Fun(GL_{r,s}(n))$.
\begin{proposition}\label{prop_5.2}
$A_nT_1T_2\cdots T_n=T_1T_2\cdots T_nA_n=A_nT_1T_2\cdots T_nA_n=det_{r,s}TA_n$.
\end{proposition}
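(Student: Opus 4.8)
The plan is to deduce all three equalities from the antisymmetrizer relations \eqref{Eq_5.7}, \eqref{Eq_5.8} and the definition \eqref{e:det}, treating $A_n = |\varepsilon\rangle\langle\varepsilon|$ as a rank-one operator. First I would establish $A_n T_1\cdots T_n = \det_{r,s}T\, A_n$. Writing $A_n T_1\cdots T_n = |\varepsilon\rangle\,(\langle\varepsilon| T_1\cdots T_n)$ and invoking \eqref{Eq_5.7}, the bracket equals $c_T\langle\varepsilon|$, so the product becomes $c_T |\varepsilon\rangle\langle\varepsilon| = c_T A_n$; by \eqref{e:det}, $c_T = \det_{r,s}T$. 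Symmetrically, $T_1\cdots T_n A_n = (T_1\cdots T_n|\varepsilon\rangle)\langle\varepsilon| = c^T |\varepsilon\rangle\langle\varepsilon| = \det_{r,s}T\, A_n$, using \eqref{Eq_5.8}. Here I must be a little careful about the tensor-factor bookkeeping: $A_n$ and $T_1\cdots T_n$ both live in $Fun(GL_{r,s}(n))\otimes \mathrm{End}(V^{\otimes n})$, the scalar $c_T$ is pulled out of the $\mathrm{End}(V^{\otimes n})$ slot into the algebra, and the identity $\langle\varepsilon|\varepsilon\rangle = 1$ from \eqref{e:normalization} is what makes $|\varepsilon\rangle\langle\varepsilon|$ idempotent so that the rank-one manipulation is clean.

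Next I would combine the two one-sided identities to get the two-sided one: $A_n T_1\cdots T_n A_n = (A_n T_1\cdots T_n) A_n = (\det_{r,s}T\, A_n) A_n = \det_{r,s}T\, A_n^2 = \det_{r,s}T\, A_n$, using $A_n^2 = A_n$. Alternatively one may bracket the other way, $A_n(T_1\cdots T_n A_n) = A_n(\det_{r,s}T\,A_n) = \det_{r,s}T\, A_n$, which also shows the left-hand and right-hand products $A_n T_1\cdots T_n$ and $T_1\cdots T_n A_n$ agree after multiplying by $A_n$ on the appropriate side; since both already equal $\det_{r,s}T\,A_n$ by the first step, all four expressions in the Proposition coincide.

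The only genuine subtlety — and the step I expect to be the main obstacle to write cleanly rather than to prove — is justifying that $\det_{r,s}T$ genuinely factors out as a scalar from $|\varepsilon\rangle\langle\varepsilon|$ in the noncommutative ring $Fun(GL_{r,s}(n))$; that is, that the $c_T$ appearing in \eqref{Eq_5.7} is a bona fide element of the coefficient algebra and not merely an operator proportional to the identity on a subspace. This is exactly what the uniqueness statement following \eqref{Anti-Sym2} guarantees: $\langle\varepsilon| T_1\cdots T_n$ satisfies the same defining system as $\langle\varepsilon|$ (as recorded in the displayed equations preceding \eqref{Eq_5.7}), hence is a left $Fun(GL_{r,s}(n))$-multiple of $\langle\varepsilon|$, and likewise on the right. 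Once this is in hand the Proposition is a three-line formal computation, so I would present it as such, citing \eqref{Eq_5.7}, \eqref{Eq_5.8}, \eqref{e:det}, \eqref{e:normalization}, and the idempotency $A_n^2 = A_n$.
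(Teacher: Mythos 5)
Your proposal is correct and follows essentially the same route as the paper: both rest on the relations \eqref{Eq_5.7}--\eqref{Eq_5.8}, the identification $c_T=c^T=\det_{r,s}T$ from \eqref{e:det}, and the idempotency $A_n^2=A_n$ coming from the normalization \eqref{e:normalization}. Your extra remark justifying that $c_T$ is a genuine element of the coefficient algebra (via the uniqueness of the solution to \eqref{Anti-Sym1}--\eqref{Anti-Sym2}) is exactly the point the paper leaves implicit, so nothing is missing.
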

\begin{proof} By definition
$T$ commutes with $|\varepsilon\rangle$ or $\langle\varepsilon|$. It follows from
$A_n^2=A_n$ that
\begin{align*}
A_nT_1\cdots T_n&=A_nT_1\cdots T_nA_n\\
&=|\varepsilon\rangle\langle\varepsilon|T_1\cdots T_n|\varepsilon\rangle\langle\varepsilon|\\
&=(\langle\varepsilon|T_1\cdots T_n|\varepsilon\rangle)|\varepsilon\rangle\langle\varepsilon|\\
&=det_{r,s}TA_n.
\end{align*}
The other identities are proved similarly.
\end{proof}

\begin{remark}\label{r:trace} The quantum determinant is also expressed as a partial trace.
\begin{align*}
Tr_{1,\ldots,n}(A_nT_1\cdots T_n)&=Tr_{1,\ldots,n}(|\varepsilon\rangle\langle\varepsilon|T_1\cdots T_n)\\
&=Tr_{1,\ldots,n}(\langle\varepsilon|T_1\cdots T_n)|\varepsilon\rangle)\\
&=det_{r,s}T.
\end{align*}
\end{remark}

In the following proposition, we give the explicit expression of the two-parameter quantum determinant $det_{r,s}T$
in $Fun(GL_{r,s}(n))$.
\begin{proposition} For any fixed $\eta\in S_n$ we have
\begin{equation}\label{e:det0}
\begin{aligned}
det_{r,s}T&=&(-s)^{-l(\eta)}\sum_{\sigma\in Sn}(-s)^{l(\sigma)}t_{\sigma(1),\eta(1)}\cdots t_{\sigma(n),\eta(n)}\\
&=&(-r)^{l(\eta)}\sum_{\sigma\in Sn}(-r)^{-l(\sigma)}t_{\eta(1),\sigma(1)}\cdots t_{\eta(n),\sigma(n)}.
\end{aligned}
\end{equation}
In particular,

\begin{equation}
\begin{aligned}
det_{r,s}T&=\sum_{\sigma\in Sn}(-s)^{l(\sigma)}t_{\sigma(1),1}\cdots t_{\sigma(n),n}\\
&=\sum_{\sigma\in Sn}(-r)^{-l(\sigma)}t_{1,\sigma(1)}\cdots t_{n,\sigma(n)}.
\end{aligned}
\end{equation}
\end{proposition}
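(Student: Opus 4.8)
The plan is to deduce both identities directly from Proposition \ref{prop_5.2} by evaluating the operator equalities $A_nT_1\cdots T_n=T_1\cdots T_nA_n=det_{r,s}T\,A_n$ on suitable basis vectors of $V^{\otimes n}$ and $(V^*)^{\otimes n}$, using only the explicit formulas already written down for $\langle\varepsilon|$ and $|\varepsilon\rangle$. No new use of the RTT relations is required beyond what is packaged in Proposition \ref{prop_5.2}.

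For the first (``row'') identity, fix $\eta\in S_n$ and evaluate $A_nT_1\cdots T_n=det_{r,s}T\,A_n$ on the column tensor $e^{\eta(1)}\otimes\cdots\otimes e^{\eta(n)}$. Using $e_{ij}e^k=\delta_{jk}e^i$ one gets $T_1\cdots T_n(e^{\eta(1)}\otimes\cdots\otimes e^{\eta(n)})=\sum_{i_1,\dots,i_n}t_{i_1,\eta(1)}\cdots t_{i_n,\eta(n)}\otimes(e^{i_1}\otimes\cdots\otimes e^{i_n})$. From the explicit form of $\langle\varepsilon|$ and $e_ie^j=\delta_{ij}$, the covector $\langle\varepsilon|$ sends $e^{i_1}\otimes\cdots\otimes e^{i_n}$ to $\tfrac{1}{[n]_{r,s}!}(-s)^{l(\sigma)}$ if $(i_1,\dots,i_n)=(\sigma(1),\dots,\sigma(n))$ for some $\sigma\in S_n$ and to $0$ otherwise (if an index repeats, no permutation $\tau$ satisfies $\tau(k)=i_k$ for all $k$, so the symmetrizing sum vanishes). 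Hence the left side equals $\bigl(\sum_{\sigma\in S_n}(-s)^{l(\sigma)}t_{\sigma(1),\eta(1)}\cdots t_{\sigma(n),\eta(n)}\bigr)\tfrac{1}{[n]_{r,s}!}|\varepsilon\rangle$, while the right side equals $det_{r,s}T\cdot\langle\varepsilon|\bigl(e^{\eta(1)}\otimes\cdots\otimes e^{\eta(n)}\bigr)|\varepsilon\rangle=det_{r,s}T\cdot\tfrac{(-s)^{l(\eta)}}{[n]_{r,s}!}|\varepsilon\rangle$. Since $|\varepsilon\rangle$ is a fixed nonzero vector (as $\langle\varepsilon|\varepsilon\rangle=1$) and the coefficients lie in $Fun(GL_{r,s}(n))$, comparing coefficients and multiplying by $(-s)^{-l(\eta)}$ yields the first formula.

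For the second (``column'') identity, fix $\eta\in S_n$ and instead pair the equality $T_1\cdots T_nA_n=det_{r,s}T\,A_n$ with the row tensor $e_{\eta(1)}\otimes\cdots\otimes e_{\eta(n)}$ on the left. Using $e_le_{ij}=\delta_{li}e_j$ to move $T_1\cdots T_n$ past this covector, and the explicit form of $|\varepsilon\rangle$ to evaluate $(e_{j_1}\otimes\cdots\otimes e_{j_n})|\varepsilon\rangle=r^{\frac{n(n-1)}{2}}(-r)^{-l(\sigma)}$ when $(j_1,\dots,j_n)=\sigma\in S_n$ and $0$ otherwise, one obtains on the left $\bigl(\sum_{\sigma\in S_n}(-r)^{-l(\sigma)}t_{\eta(1),\sigma(1)}\cdots t_{\eta(n),\sigma(n)}\bigr)r^{\frac{n(n-1)}{2}}\langle\varepsilon|$ and on the right $det_{r,s}T\cdot r^{\frac{n(n-1)}{2}}(-r)^{-l(\eta)}\langle\varepsilon|$. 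Again $\langle\varepsilon|\ne 0$, so comparing coefficients and multiplying by $(-r)^{l(\eta)}$ gives the second formula, and the displayed ``in particular'' identities are the case $\eta=\mathrm{id}$, where $l(\mathrm{id})=0$. The argument is essentially bookkeeping; the only points that need care are the index conventions for the left and right actions of the matrix units $e_{ij}$ on column versus row vectors (together with the transpose built into $e_i=e_i^{T}$), the vanishing of $\langle\varepsilon|$ and $|\varepsilon\rangle$ on basis tensors with a repeated index, and checking that the quantities produced are genuine scalars in $Fun(GL_{r,s}(n))$ multiplying the single fixed vector $|\varepsilon\rangle$ (resp.\ covector $\langle\varepsilon|$), so that the final comparison of coefficients is legitimate.
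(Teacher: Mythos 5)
Your proof is correct and follows essentially the same route as the paper: evaluate the operator identity of Proposition \ref{prop_5.2} on the basis tensor $e^{\eta(1)}\otimes\cdots\otimes e^{\eta(n)}$ (resp.\ pair with the row tensor $e_{\eta(1)}\otimes\cdots\otimes e_{\eta(n)}$), use the explicit formulas for $\langle\varepsilon|$ and $|\varepsilon\rangle$, and compare coefficients. The only difference is that you write out the second (column) identity in full, whereas the paper proves only the first and declares the second ``similar''; your bookkeeping for both is accurate.
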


\begin{proof} The two formulas of (\ref{e:det0}) are proved similarly, so we only consider the
first one. Recall that Proposition \ref{prop_5.2} says that $A_nT_1\cdots T_n=det_{r,s}T A_n$.
For any $\eta\in S_n$ we apply the left-hand side to the column
vector $e^{\eta(1)}\otimes\cdots \otimes e^{\eta(n)}$:
\begin{equation}\label{e:det2}
\begin{aligned}
A_nT_1\cdots T_n (e^{\eta(1)}\otimes\cdots \otimes e^{\eta(n)})&=
A_n\sum_{i_1,...,i_n}t_{i_1,\eta(1)}\cdots t_{i_n,\eta(n)}(e^{i_1}\otimes\cdots \otimes e^{i_n})\\
&=\sum_{i_1,...,i_n}t_{i_1,\eta(1)}\cdots t_{i_n,\eta(n)}|\varepsilon\rangle\langle\varepsilon|(e^{i_1}\otimes\cdots \otimes e^{i_n})\\
&=\frac{1}{[n]_{r,s}!}\sum_{\sigma\in Sn}(-s)^{l(\sigma)}t_{\sigma(1),\eta(1)}\cdots t_{\sigma(n),\eta(n)}|\varepsilon\rangle.
\end{aligned}
\end{equation}
On the other hand we have
\begin{equation}\label{e:det3}
det_{r,s}TA_n(e^{\eta(1)}\otimes\cdots \otimes e^{\eta(n)})=\frac{1}{[n]_{r,s}!}(-s)^{l(\eta)}det_{r,s}T|\varepsilon\rangle.
\end{equation}
Paring with $\langle\varepsilon|$ to the right-hand sides of (\ref{e:det2}-\ref{e:det3}) we obtain that
\begin{equation*}
det_{r,s}T=(-s)^{-l(\eta)}\sum_{\sigma\in Sn}(-s)^{l(\sigma)}t_{\sigma(1),\eta(1)}\cdots t_{\sigma(n),\eta(n)}.
\end{equation*}
\end{proof}
\begin{remark} One can also show the following
\begin{equation}
\begin{aligned}
det_{r,s}T&=\sum_{\sigma\in Sn}(-r)^{l(\sigma)}t_{\sigma(n),n}\cdots t_{\sigma(1),1}\\
&=\sum_{\sigma\in Sn}(-s)^{-l(\sigma)}t_{n,\sigma(n)}\cdots t_{1,\sigma(1)}.
\end{aligned}
\end{equation}
When $s=q=r^{-1}$, the two-parameter quantum determinant is reduced to the one-parameter quantum determinant
$$det_{q}T=\sum_{\sigma\in Sn}(-q)^{l(\sigma)}t_{1,\sigma(1)}\cdots t_{n,\sigma(n)}.
$$
\end{remark}
Different from the one-parameter case, the two-parameter quantum determinant is not a central
element, but a quasi-cental element. It is still a regular element
in the ring $Fun(GL_{r,s}(n))$, so the antipode is well-defined.
\begin{theorem}\label{t:det}
The two-parameter quantum determinant $det_{rs}T$ is quasi-central in
$Fun(GL_{r,s}(n))$. In fact,
\begin{equation}
(det_{r,s}T )T=M^{-1}TM(det_{r,s}T),
\end{equation}
where
$$M=\begin{pmatrix}
    (rs)^{n-1} &  &  &  \\
     & (rs)^{n-2} &  &  \\
     &  & \ddots &  \\
     &  &  & 1\\
  \end{pmatrix},
$$
which implies that $(det_{r,s}T)t_{ij}=(rs)^{i-j}t_{ij}(det_{r,s}T)$.
\end{theorem}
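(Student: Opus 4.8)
The plan is to introduce an auxiliary copy $V_0$ of $V$ carrying its own matrix of generators $T_0=\sum_{i,j}t_{ij}\otimes e_{ij}\otimes 1^{\otimes n}$, and to compare $(det_{r,s}T)\,T_0$ with $T_0\,(det_{r,s}T)$ inside $Fun(GL_{r,s}(n))\otimes End(V_0\otimes V^{\otimes n})$, using the rank-one operator $A_n=|\varepsilon\rangle\langle\varepsilon|$ on the last $n$ slots. First I would transport $T_0$ to the left through $T_1\cdots T_n$: iterating the RTT relation in the form $T_kT_0=R_{0k}T_0T_kR_{0k}^{-1}$ and using that $R_{0k}$ commutes with $T_j$ for $j\notin\{0,k\}$, one obtains
\[
T_1\cdots T_nT_0=\mathcal R\,T_0\,(T_1\cdots T_n)\,\mathcal R^{-1},\qquad \mathcal R:=R_{0n}R_{0,n-1}\cdots R_{01}.
\]
Left-multiplying by $A_n$ and invoking Proposition~\ref{prop_5.2} ($A_nT_1\cdots T_n=(det_{r,s}T)A_n$), together with the obvious commutation $A_nT_0=T_0A_n$, gives
\[
(det_{r,s}T)\,T_0A_n=A_n\mathcal R\,T_0\,(T_1\cdots T_n)\,\mathcal R^{-1}.
\]

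The key step is an absorption identity: there is a diagonal matrix $M_0\in End(V_0)$ with $A_n\mathcal R=M_0A_n$, and hence also $A_n\mathcal R^{-1}=M_0^{-1}A_n$ since $A_n\mathcal R\mathcal R^{-1}=A_n$. I would prove this in two steps. $(i)$ Conjugating the Yang--Baxter relation $R_{0k}R_{0,k+1}R_{k,k+1}=R_{k,k+1}R_{0,k+1}R_{0k}$ by the permutation operator $P_{k,k+1}$ and using $\widehat R=PR$ yields $R_{0,k+1}R_{0k}\widehat R_{k,k+1}=\widehat R_{k,k+1}R_{0,k+1}R_{0k}$; since in $\mathcal R$ the factors $R_{0,k+1}R_{0k}$ occur consecutively while every other factor commutes with $\widehat R_{k,k+1}$, this shows $\mathcal R$ commutes with every $\widehat R_{k,k+1}$, hence with every idempotent $P^{+}_{k,k+1}$. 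Consequently $\mathcal R$ preserves $\sum_{k}\operatorname{Im}P^{+}_{k,k+1}$, which by the uniqueness of the $(r,s)$-antisymmetrizer coincides with $\ker\langle\varepsilon|$; descending to the one-dimensional quotient $V^{\otimes n}/\ker\langle\varepsilon|$ this is exactly the statement $\langle\varepsilon|\mathcal R=M_0\langle\varepsilon|$, i.e.\ $A_n\mathcal R=M_0A_n$. $(ii)$ To identify $M_0$, evaluate $\langle\varepsilon|\mathcal R$ on $e^{m}_0\otimes e^{1}_1\otimes\cdots\otimes e^{n}_n$ and compare with $M_0(e^{m}_0)\,\langle\varepsilon|(e^{1}_1\otimes\cdots\otimes e^{n}_n)$. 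Tracking the successive crossings in $\mathcal R=R_{0n}\cdots R_{01}$, the auxiliary vector $e^{m}$ passes the strands in states $e^{1},\dots,e^{n}$ in turn; whenever an $R$-matrix contributes its off-diagonal term $(s-r)e_{ij}\otimes e_{ji}$ it transfers a repeated index onto two of the strands $1,\dots,n$, which is then killed by $\langle\varepsilon|$, so only the fully diagonal trajectory survives, with weight $(rs)^{m-1}s$. Hence $M_0=\mathrm{diag}\bigl(s,\,s(rs),\,\dots,\,s(rs)^{n-1}\bigr)=s(rs)^{n-1}M^{-1}$.

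Substituting $A_n\mathcal R=M_0A_n$ and $A_n\mathcal R^{-1}=M_0^{-1}A_n$ into the displayed identity, commuting $A_n$ past $T_0$, applying Proposition~\ref{prop_5.2} once more, and using that $det_{r,s}T\in Fun(GL_{r,s}(n))$ commutes with the numerical matrix $M_0^{-1}\in End(V_0)$, one obtains $(det_{r,s}T)\,T_0A_n=M_0T_0M_0^{-1}(det_{r,s}T)\,A_n$. Since $A_n$ equals $\mathrm{id}_{V_0}$ tensored with the fixed nonzero operator $|\varepsilon\rangle\langle\varepsilon|$ on $V^{\otimes n}$, it may be cancelled, giving $(det_{r,s}T)\,T=M_0TM_0^{-1}(det_{r,s}T)=M^{-1}TM(det_{r,s}T)$ because $M_0$ and $M^{-1}$ differ only by the scalar $s(rs)^{n-1}$; reading off $(i,j)$-entries yields $(det_{r,s}T)t_{ij}=(rs)^{i-j}t_{ij}(det_{r,s}T)$. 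I expect the absorption identity to be the crux: the Yang--Baxter shuffle giving $[\mathcal R,\widehat R_{k,k+1}]=0$ is clean, but the combinatorial bookkeeping in step $(ii)$---checking that every off-diagonal correction dies against the antisymmetrizer, so that the diagonal weight $s(rs)^{m-1}$ is precisely what remains---is the most delicate part and is where the two parameters $r,s$ genuinely enter.
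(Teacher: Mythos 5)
Your proof is correct, and its overall architecture is the same as the paper's: adjoin an auxiliary tensor slot carrying a copy of $T$, shuffle it through $T_1\cdots T_n$ with the RTT relation, absorb the resulting product of $R$-matrices into $A_n$ as a diagonal matrix, and then strip off $A_n$ (the paper takes a partial trace and puts the auxiliary slot at position $n+1$ with $R_{k,n+1}$, the mirror image of your slot $0$ with $R_{0k}$; accordingly your $M_0=s(rs)^{n-1}M^{-1}=sM'$ is the counterpart of the paper's $sM_{n+1}$, and both conjugations collapse to $M^{-1}TM$). The one genuine divergence is in how the absorption identity is established: the paper's Lemma~\ref{lemma5.5} computes the two-sided sandwich $\langle\varepsilon|R_{1,n+1}\cdots R_{n,n+1}|\varepsilon\rangle$ from the block-triangularity of $R$ and upgrades it to a one-sided identity by noting that $T_i\mapsto R_{i,n+1}$ is a representation, so Proposition~\ref{prop_5.2} applies; you instead derive the one-sided identity $\langle\varepsilon|\mathcal R=M_0\langle\varepsilon|$ directly from the Yang--Baxter commutation $[\mathcal R,\widehat R_{k,k+1}]=0$ plus the uniqueness of the antisymmetrizer (which the paper also invokes without proof), and then determine $M_0$ by evaluating on one decomposable vector. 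Your trajectory argument for that evaluation is sound: the first off-diagonal event at step $j$ plants a repeated index in slots $m$ and $j$, and since the later factors $R_{0j'}$ never revisit those two slots the repetition survives to be killed by $\langle\varepsilon|$, leaving only the diagonal weight $s(rs)^{m-1}$. Either route works; yours trades the explicit triangular-block computation for a representation-free braid-relation argument.
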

Before we prove the theorem, we need the following lemma.

\begin{lemma}\label{lemma5.5} Let $M_{n+1}=1^{\otimes n}\otimes M\in End(V)^{\otimes(n+1)}$, then
$sM_{n+1}(A_n\otimes 1)=(A_n\otimes 1)R_{1,n+1}\cdots R_{n,n+1}$ on $V^{\otimes(n+1)}$.
\end{lemma}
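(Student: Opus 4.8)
The plan is to verify the operator identity
$s\,M_{n+1}(A_n\otimes 1)=(A_n\otimes 1)R_{1,n+1}\cdots R_{n,n+1}$
by computing how each side acts on a basis vector $e^{j_1}\otimes\cdots\otimes e^{j_n}\otimes e^{k}$ of $V^{\otimes(n+1)}$ and pairing on the left with $\langle\varepsilon|$ in the first $n$ tensor factors. Since $A_n=|\varepsilon\rangle\langle\varepsilon|$ has rank one, both sides, after acting on any vector, land in the line $\mathbb C\,|\varepsilon\rangle\otimes V$; so it suffices to check the equality of the scalar-valued functions obtained by applying $\langle\varepsilon|\otimes 1$ to both sides. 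Thus the identity reduces to showing, for every $k$,
$$s\,(rs)^{n-1}\,\langle\varepsilon|\otimes e_k \;=\; \langle\varepsilon|\otimes e_k \cdot \bigl(R_{1,n+1}\cdots R_{n,n+1}\bigr),$$
as elements of $(V^*)^{\otimes n}\otimes V^*$, where on the right the product $R_{1,n+1}\cdots R_{n,n+1}$ is applied to the full space and then the first $n$ legs are contracted against $\langle\varepsilon|$.

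The key computational step is to understand the action of a single $R_{a,n+1}$ on a vector of the form $(\text{something})\otimes e^{k}$ in the last slot, read off from \eqref{R-matrix}: acting on $e^{j}$ (slot $a$) $\otimes\, e^{k}$ (slot $n+1$), the matrix $R$ produces $rs\, e^{j}\otimes e^{k}$ if $j>k$, $e^{j}\otimes e^{k}$ if $j<k$, $s\, e^{k}\otimes e^{k}$ if $j=k$, plus the off-diagonal term $(s-r)\,e^{k}\otimes e^{j}$ when $j>k$ coming from $e_{jk}\otimes e_{kj}$. One then applies $R_{1,n+1}\cdots R_{n,n+1}$ successively; at each stage I would track the coefficient and the index in the last slot. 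Crucially, after contracting with $\langle\varepsilon|$ the antisymmetry \eqref{Anti-Sym1} forces the surviving terms to be totally antisymmetric in the first $n$ indices, which kills all ``collision'' contributions where two of the first $n$ indices become equal. I expect that the net effect on $e^{j_1}\otimes\cdots\otimes e^{j_n}$, when $\{j_1,\dots,j_n\}=\{1,\dots,n\}$ (the only case that survives pairing with $\langle\varepsilon|$), is multiplication by $s\cdot(rs)^{\#\{a: j_a>k\}}\cdot 1^{\#\{a:j_a<k\}}$; but since $\{j_1,\dots,j_n\}=\{1,\dots,n\}$, exactly $k-1$ of the indices exceed... wait, exactly $n-k$ indices exceed $k$ and $k-1$ are below $k$, giving the scalar $s\,(rs)^{n-k}$, which is precisely $s\,M_{n+1}$ acting on the last slot $e^k$ (where $M$ has entry $(rs)^{n-k}$ in position $k$). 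The off-diagonal terms $(s-r)\,e^{k}\otimes e^{j}$ must be shown to cancel or to reassemble, again using \eqref{Anti-Sym1}; this is where the Hecke relation $(\widehat R-s)(\widehat R+r)=0$ and the identity $P^+|\varepsilon\rangle$-type relations do the bookkeeping.

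The main obstacle is controlling the off-diagonal contributions of the repeated $R$-action: naively expanding $R_{1,n+1}\cdots R_{n,n+1}$ produces a sum over subsets of the $R$'s where one picks the $(s-r)e_{ij}\otimes e_{ji}$ term, and one must argue these all vanish after pairing with $\langle\varepsilon|$ or recombine into the diagonal answer. The clean way to handle this is to avoid the brute-force expansion and instead use the intertwining/exchange property $R_{a,n+1}$ with $P^+_{b,b+1}$ for $b\neq a,a\pm$ appropriately, together with the defining relations \eqref{Anti-Sym1} of $\langle\varepsilon|$, so that the whole product $R_{1,n+1}\cdots R_{n,n+1}$ can be ``pushed through'' the antisymmetrizer and collapses to the scalar $s\,(rs)^{n-k}$ on each basis vector $e^k$ in slot $n+1$. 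Once that is established on basis vectors, linearity gives the operator identity on all of $V^{\otimes(n+1)}$. I would close by remarking that this lemma is exactly the input needed to commute $M$ past $T_1\cdots T_n$ in the proof of Theorem~\ref{t:det}.
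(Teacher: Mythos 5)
Your reduction to the covector identity $(\langle\varepsilon|\otimes e_k)\,R_{1,n+1}\cdots R_{n,n+1}=s(rs)^{n-k}\,\langle\varepsilon|\otimes e_k$ is legitimate (both sides of the lemma have image in $|\varepsilon\rangle\otimes V$), and your diagonal bookkeeping is right: with $\{j_1,\dots,j_n\}=\{1,\dots,n\}$ exactly $n-k$ indices exceed $k$, giving $s(rs)^{n-k}$, which is the $k$-th entry of $sM$. (Your displayed equation has $(rs)^{n-1}$ where it should be $(rs)^{n-k}$; you correct this later.) The genuine gap is the step you yourself flag as ``the main obstacle'': the cancellation of the $(s-r)\,e_{ij}\otimes e_{ji}$ contributions. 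Your stated reason --- that the antisymmetry of $\langle\varepsilon|$ ``kills all collision contributions'' --- does not work as an argument, because $\langle\varepsilon|$ sits on the \emph{left}: the collision terms appear in the output covector, and nothing is being paired against $\langle\varepsilon|$ that would annihilate them. (They do in fact cancel, e.g.\ for $n=2$ the two $(s-r)$ terms from $R_{13}$ and $R_{23}$ cancel exactly, but this requires an argument, not an appeal to antisymmetry of the input.) Your fallback --- push $P^{+}_{b,b+1}$ through the product via Yang--Baxter so the output is again annihilated by all $P^{+}_{b,b+1}$ and hence lies in $\langle\varepsilon|\otimes V^{*}$ --- is the correct missing idea, but it is only gestured at, not carried out; as it stands the proof is incomplete precisely where the lemma is nontrivial.

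For comparison, the paper sidesteps the one-sided computation entirely by a two-step argument. First it proves only the \emph{two-sided} sandwich $(A_n\otimes 1)R_{1,n+1}\cdots R_{n,n+1}(A_n\otimes 1)=sM_{n+1}(A_n\otimes 1)$: here the off-diagonal terms die for a transparent structural reason, namely that each off-diagonal entry of $R_{a,n+1}$ strictly lowers the index in slot $n+1$ while swapping it into slot $a$, so after pairing with $|\varepsilon\rangle$ on the right (which forces the first $n$ output indices to be a permutation of $1,\dots,n$) the last index cannot return to its initial value once any off-diagonal term is used. Second, it removes the right-hand $A_n$ by observing that $T_i\mapsto R_{i,n+1}$ is a representation of $A(R)$ (the RTT relation for this assignment is exactly the Yang--Baxter equation), so Proposition \ref{prop_5.2} gives $(A_n\otimes 1)R_{1,n+1}\cdots R_{n,n+1}=R_{1,n+1}\cdots R_{n,n+1}(A_n\otimes 1)$, whence $(A_n\otimes 1)X(A_n\otimes 1)=(A_n\otimes 1)^2X=(A_n\otimes 1)X$. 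This second step is the packaged form of the intertwining property you mention; incorporating it (or carrying out your $P^{+}$-commutation argument explicitly) would close the gap.
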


\begin{proof} We order the indices $(ij)$ lexicographically. As $R_{ij}^{ji}=s-r$ when
$i>j$, the matrix $R$ is an upper triangular block matrix with diagonal blocks given by
\begin{equation}\label{Eq_5.15}
R^{ik}_{ik}=\left\{
                   \begin{array}{ll}
                     s, & \hbox{i=k;} \\
                     rs, & \hbox{$i>k$;} \\
                     1, & \hbox{$i<k$.}
                   \end{array}
                 \right.
\end{equation}
In the same manner $R_{1,n+1}R_{2,n+1}\cdots R_{n,n+1}$ is also an
upper triangular block matrix with diagonal blocks given by
\begin{equation}\label{e:diagonal-block}
\sum_{i_1,...,i_n,k}R^{i_1k}_{i_1k}\cdots R^{i_nk}_{i_nk}e_{i_1,i_1}\otimes \cdots \otimes e_{i_n,i_n}\otimes e_{k,k}.
\end{equation}
We then get that for $\langle\varepsilon|=\langle\varepsilon|\otimes 1$
\begin{equation}
\langle\varepsilon|R_{1,n+1}\cdots R_{n,n+1}|\varepsilon\rangle=
\sum_{k=1}^n \frac{r^{\frac{n(n-1)}{2}}}{[n]_{r,s}!}\sum_{\sigma\in S_n}(sr^{-1})^{l(\sigma)}R^{\sigma(1)k}_{\sigma(1)k}\cdots R^{\sigma(n)k}_{\sigma(n)k}(e_{k,k})_{n+1}.
\end{equation}
Moreover, by Eq. (\ref{Eq_5.15}) we have
$R^{\sigma(1)k}_{\sigma(1)k}\cdots R^{\sigma(n)k}_{\sigma(n)k}=s(rs)^{n-k}$ for any $\sigma\in S_n$.
Therefore
\begin{equation}
\langle\varepsilon|R_{1,n+1}\cdots R_{n,n+1}|\varepsilon\rangle=
\sum_{k=1}^n s(rs)^{n-k}(e_{k,k})_{n+1}=sM_{n+1}.
\end{equation}

Note that $A_{n}=|\varepsilon\rangle\langle\varepsilon|$, we then obtain that
\begin{equation}
(A_n\otimes 1)R_{1,n+1}\cdots R_{n,n+1}(A_n\otimes 1)=sM_{n+1}(A_n\otimes 1).
\end{equation}
On the other hand the map
$$T_i\mapsto R_{i,n+1}$$ defines a representation of $Fun(GL_{r,s}(n))$, so $A_nT_1\cdots T_n=T_1\cdots T_nA_n$
implies that
\begin{align*}
(A_n\otimes 1)R_{1,n+1}\cdots R_{n,n+1}(A_n\otimes 1)&=(A_n\otimes 1)^2R_{1,n+1}\cdots R_{n,n+1}\\
&=(A_n\otimes 1)R_{1,n+1}\cdots R_{n,n+1}=sM_{n+1}(A_n\otimes 1).
\end{align*}
\end{proof}

Now we prove Theorem \ref{t:det}.
\begin{proof} For brevity we write $A_n$ for
$(A_n\otimes 1)$ which is clear from the context.
Using the RTT relations of $Fun(GL_{r,s}(n))$ to move
$T_{n+1}$, we have that
\begin{equation}
A_nT_1\cdots T_nT_{n+1}=A_n(R_{1,n+1}\cdots R_{n,n+1})^{-1}T_{n+1}T_1\cdots T_n(R_{1,n+1}\cdots R_{n,n+1}).
\end{equation}
It follows from Lemma \ref{lemma5.5} that
\begin{equation}
A_nT_1\cdots T_nT_{n+1}=s^{-1}M_{n+1}^{-1}T_{n+1}A_nT_1\cdots T_n(R_{1,n+1}\cdots R_{n,n+1}).
\end{equation}
Note that $A_nT_1\cdots T_n=A_nT_1\cdots T_nA_n$ (Prop. \ref{prop_5.2}). Applying Lemma \ref{lemma5.5} again
we obtain that
\begin{align*}
A_nT_1\cdots T_nT_{n+1}&=s^{-1}M_{n+1}^{-1}T_{n+1}A_nT_1\cdots T_nA_n(R_{1,n+1}\cdots R_{n,n+1})\\
&=M_{n+1}^{-1}T_{n+1}M_{n+1}A_nT_1\cdots T_n.
\end{align*}
Taking partial trace $Tr_{1,\ldots,n}$ (see Remark \ref{r:trace}), 
we finally get that $(det_{r,s}T) T=M^{-1}TM (det_{r,s}T)$.
\end{proof}

\section{FRT realization of two-parameter quantum groups}

In this section we study the algebra $U(R)$
as the dual Hopf algebra of $Fun(GL_{r,s}(n))$.

Consider the R-matrix $\in End(V\otimes V)$
$$R^{(+)}=PRP=R_{21}=s\sum_{i=1}^{n}e_{ii}\otimes e_{ii}+rs\sum_{i<j}e_{ii}\otimes e_{jj}+
\sum_{i>j}e_{ii}\otimes
e_{jj}+(s-r)\sum_{i<j}e_{ij}\otimes e_{ji},$$
which is another solution of the Yang-Baxter equation (\ref{YB Eq}).

\begin{definition}
$U(R)$ is an associative algebra with generators  $l^{+}_{ij}$, $l^{-}_{ji}$, $1\leq i\leq j\leq n$
subject to the quadratic relations given by
\begin{align}\label{generating relation1}
R^{(+)}L^{\pm}_{1}L^{\pm}_2&=L^{\pm}_2L^{\pm}_{1}R^{(+)},\\
\label{generating relation2}
R^{(+)}L^{+}_1L^{-}_2&=L^{-}_2L^{+}_1R^{(+)},
\end{align}
where $L^{\pm}_1=\sum l^{\pm}_{ij}e_{ij}\otimes 1$, $L^{\pm}_2=\sum l^{\pm}_{ij}1\otimes e_{ij}$ and $L^{\pm}=(l^{\pm}_{ij})$ $(1\leq i,j\leq n)$ are invertible triangular matrices
with $l^{+}_{ij}=l^{-}_{ji}=0$ for $1\leq j<i\leq n$.
\end{definition}

\begin{proposition}
The algebra $U(R)$ is a Hopf algebra with comultiplication, antipode and counit given by
\begin{align}
\Delta(l_{ij}^{\pm})&=\sum_{k}l_{ik}^{\pm}\otimes l_{kj}^{\pm},\\
S(L^{\pm})&=(L^{\pm})^{-1},\\
\epsilon(l_{ij}^{\pm})&=\delta_{ij}.
\end{align}
\end{proposition}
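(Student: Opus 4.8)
The argument is the two‑parameter analogue of the Faddeev--Reshetikhin--Takhtajan construction, so the plan is to verify, in turn, that $\Delta$, $\epsilon$ and $S$ are well defined as maps on $U(R)$, then the bialgebra axioms, and finally the antipode axioms; every check is carried out on the generators $l^{\pm}_{ij}$ and extended by (anti)multiplicativity.

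\emph{Well-definedness of $\Delta$ and $\epsilon$.} First I extend $l^{\pm}_{ij}\mapsto\sum_{k}l^{\pm}_{ik}\otimes l^{\pm}_{kj}$ to an algebra homomorphism of the free algebra into $U(R)\otimes U(R)$ and check that it annihilates the relations (\ref{generating relation1})--(\ref{generating relation2}). Writing $\Delta(L^{\pm}_{a})=(L^{\pm}_{a})'(L^{\pm}_{a})''$, where $'$ and $''$ label the two tensor factors of $U(R)\otimes U(R)$ and entries carrying different labels commute, one has
\begin{align*}
R^{(+)}\Delta(L^{\pm}_{1})\Delta(L^{\pm}_{2})
&=R^{(+)}(L^{\pm}_{1})'(L^{\pm}_{2})'(L^{\pm}_{1})''(L^{\pm}_{2})''\\
&=(L^{\pm}_{2})'(L^{\pm}_{1})'\,R^{(+)}\,(L^{\pm}_{1})''(L^{\pm}_{2})''\\
&=(L^{\pm}_{2})'(L^{\pm}_{2})''(L^{\pm}_{1})'(L^{\pm}_{1})''R^{(+)}
=\Delta(L^{\pm}_{2})\Delta(L^{\pm}_{1})R^{(+)},
\end{align*}
using the relation in each leg, and the identical computation with $(L^{+}_{1},L^{-}_{2})$ disposes of the mixed relation (\ref{generating relation2}). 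One must also check that $\Delta(L^{\pm})$ remains an invertible triangular matrix: the range of $k$ in $\Delta(l^{\pm}_{ij})=\sum_{k}l^{\pm}_{ik}\otimes l^{\pm}_{kj}$ forces the off-triangular entries to vanish, while the diagonal entries are grouplike, $\Delta(l^{\pm}_{ii})=l^{\pm}_{ii}\otimes l^{\pm}_{ii}$, hence invertible because the $l^{\pm}_{ii}$ are. The counit $\epsilon(L^{\pm})=I$ trivially kills the relations since $R^{(+)}$ is a scalar matrix.

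\emph{Bialgebra axioms and the antipode.} Coassociativity and counitality reduce to the standard matrix-coalgebra identities $(\Delta\otimes\mathrm{id})\Delta(l^{\pm}_{ij})=\sum_{k,m}l^{\pm}_{ik}\otimes l^{\pm}_{km}\otimes l^{\pm}_{mj}=(\mathrm{id}\otimes\Delta)\Delta(l^{\pm}_{ij})$ and $\sum_{k}\epsilon(l^{\pm}_{ik})l^{\pm}_{kj}=l^{\pm}_{ij}=\sum_{k}l^{\pm}_{ik}\epsilon(l^{\pm}_{kj})$, which suffice since all the maps in sight are algebra morphisms. For the antipode, invertibility of $L^{\pm}$ places the entries $((L^{\pm})^{-1})_{ij}$ in $U(R)$, so we may set $S(L^{\pm})=(L^{\pm})^{-1}$ and extend $S$ as an algebra anti-homomorphism; the inverse of a triangular matrix being triangular, the vanishing pattern $l^{+}_{ij}=l^{-}_{ji}=0$ for $j<i$ is preserved. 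To see $S$ is well defined I apply it to $R^{(+)}L^{\pm}_{1}L^{\pm}_{2}=L^{\pm}_{2}L^{\pm}_{1}R^{(+)}$; since $S$ reverses products and fixes the scalar matrix $R^{(+)}$ this becomes $R^{(+)}(L^{\pm}_{2})^{-1}(L^{\pm}_{1})^{-1}=(L^{\pm}_{1})^{-1}(L^{\pm}_{2})^{-1}R^{(+)}$, which follows from the original relation by left-multiplying by $(L^{\pm}_{2}L^{\pm}_{1})^{-1}$ and right-multiplying by $(L^{\pm}_{1}L^{\pm}_{2})^{-1}$; the mixed relation is treated the same way. The antipode identities are then immediate from $L^{\pm}(L^{\pm})^{-1}=(L^{\pm})^{-1}L^{\pm}=I$, namely $m(S\otimes\mathrm{id})\Delta(l^{\pm}_{ij})=\sum_{k}((L^{\pm})^{-1})_{ik}(L^{\pm})_{kj}=\delta_{ij}=m(\mathrm{id}\otimes S)\Delta(l^{\pm}_{ij})$.

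\emph{Expected difficulty.} No single step is deep; the construction is the verbatim two-parameter transcription of the FRT scheme, and the Yang--Baxter and Hecke data of $R^{(+)}$ enter only through the relations themselves. The point that demands genuine care is the matrix bookkeeping --- keeping track of the auxiliary leg indices $1,2$ and of the triangularity constraints while applying the structure maps. If one prefers to sidestep the computations, the Hopf structure can instead be obtained by dualizing the already-established Hopf algebra $Fun(GL_{r,s}(n))$ along the pairing between $U(R)$ and $Fun(GL_{r,s}(n))$ furnished by the R-matrices.
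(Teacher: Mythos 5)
Your verification is correct and is exactly the standard Faddeev--Reshetikhin--Takhtajan argument that the paper itself takes for granted (the proposition is stated there without proof), so there is nothing to reconcile: coassociativity/counitality on the matrix coalgebra, compatibility of $\Delta$, $\epsilon$, $S$ with the RTT relations, and preservation of the triangularity pattern are all the checks needed, and you carry each out correctly. One wording quibble: $R^{(+)}$ is not a scalar matrix but a matrix with scalar entries; the actual point is that $\epsilon(L^{\pm})=I$ turns both sides of each RTT relation into $R^{(+)}$ itself, which is what your computation in fact uses.
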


As in the dual algebra case, let $\widehat{R}^{(+)}=PR^{(+)}=\widehat{R}_{21}$, then
$$\widehat{R}^{(+)}=s\sum_{i=1}^{n}e_{ii}\otimes e_{ii}+rs\sum_{i>j}e_{ij}\otimes e_{ji}+
\sum_{i<j}e_{ij}\otimes
e_{ji}+(s-r)\sum_{i>j}e_{ii}\otimes e_{jj}.$$
Moreover it satisfies the Hecke relation
\begin{equation}
(\hat{R}^{(+)}-s)(\hat{R}^{(+)}+r)=0.
\end{equation}

For this R-matrix $\widehat{R}^{(+)}$, we introduce antisymmetric tensors $\langle\varepsilon^{(+)}|$, $|\varepsilon^{(+)}\rangle$ by the equations:
\begin{align}\label{e:antisym1}
\langle\varepsilon^{(+)}|\varepsilon^{(+)}\rangle &=1,\\ \label{e:antisym2}
\langle\varepsilon^{(+)}|(\hat{R}^{(+)}_{i,i+1}+r)&=0,~~~~~~~~~~~~~~~~~~~i=1,2,\ldots,n-1,\\ \label{e:antisym3}
(\hat{R}^{(+)}_{i,i+1}+r)|\varepsilon^{(+)}\rangle &=0,~~~~~~~~~~~~~~~~~~~i=1,2,\ldots,n-1.
\end{align}
These equations determine the antisymmetric tensors
up to a constant. It is easy to see that $\langle\varepsilon^{(+)}|$, $|\varepsilon^{(+)}\rangle$
can be chosen as follows.
\begin{align}
\langle\varepsilon^{(+)}|&=\frac{1}{[n]_{r,s}!}\sum_{\sigma\in S_n}(-s)^{-l(\sigma)}e_{\sigma(1)}\otimes\cdots \otimes e_{\sigma(n)}\\
|\varepsilon^{(+)}\rangle &=s^{\frac{n(n-1)}{2}}\sum_{\sigma\in S_n}(-r)^{l(\sigma)}e^{\sigma(1)}\otimes\cdots \otimes e^{\sigma(n)}.
\end{align}

Define the rank one matrix $A^{(+)}_n=|\varepsilon^{(+)}\rangle\langle\varepsilon^{(+)}|$.
Explicitly we have that
\begin{equation}
A^{(+)}_n=
\frac{s^{\frac{n(n-1)}{2}}}{[n]_{r,s}!}\sum_{\sigma,\tau\in S_n}(-r)^{l(\tau)}(-s)^{-l(\sigma)}
e_{\tau(1)\sigma(1)}\otimes\cdots \otimes e_{\tau(n)\sigma(n)}.
\end{equation}
The following result gives commutation relations among the Weyl generators of $U(R)$.
\begin{proposition}\label{prop3.4} In $U(R)$ the following identities are satisfied for $k=1,2,\ldots,n$:
\begin{align*}
\begin{aligned}
A^{(+)}_nL_1^{+}\cdots L_k^{+}L_{k+1}^{-}\cdots L_n^{-}A^{(+)}_n&=&A^{(+)}_nL_1^{+}\cdots L_k^{+}L_{k+1}^{-}\cdots L_n^{-}\\
&=&L_1^{+}\cdots L_k^{+}L_{k+1}^{-}\cdots L_n^{-}A^{(+)}_n.\\
\end{aligned}
\end{align*}
\end{proposition}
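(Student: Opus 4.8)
The plan is to run the argument used for the quantum determinant in Proposition~\ref{prop_5.2} and Theorem~\ref{t:det}. Since $A^{(+)}_n=|\varepsilon^{(+)}\rangle\langle\varepsilon^{(+)}|$ is a rank one idempotent, for any operator $X$ on $V^{\otimes n}$ with coefficients in $U(R)$ one has $A^{(+)}_nXA^{(+)}_n=\big(\langle\varepsilon^{(+)}|X|\varepsilon^{(+)}\rangle\big)A^{(+)}_n$. Moreover, because $|\varepsilon^{(+)}\rangle$ and $\langle\varepsilon^{(+)}|$ have scalar entries (so commute with elements of $U(R)$), if $\langle\varepsilon^{(+)}|X=c\,\langle\varepsilon^{(+)}|$ for some $c\in U(R)$ then $A^{(+)}_nX=cA^{(+)}_n=A^{(+)}_nXA^{(+)}_n$, and dually if $X|\varepsilon^{(+)}\rangle=|\varepsilon^{(+)}\rangle\,c'$ then $XA^{(+)}_n=c'A^{(+)}_n=A^{(+)}_nXA^{(+)}_n$. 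Writing $\Lambda_k=L_1^{+}\cdots L_k^{+}L_{k+1}^{-}\cdots L_n^{-}$, the whole proposition therefore reduces to showing that $\langle\varepsilon^{(+)}|\Lambda_k$ is a left $U(R)$-multiple of $\langle\varepsilon^{(+)}|$ and $\Lambda_k|\varepsilon^{(+)}\rangle$ a right multiple of $|\varepsilon^{(+)}\rangle$; pairing the two relations with $|\varepsilon^{(+)}\rangle$, resp. $\langle\varepsilon^{(+)}|$, and using the normalization $\langle\varepsilon^{(+)}|\varepsilon^{(+)}\rangle=1$ will automatically identify both scalars with $\langle\varepsilon^{(+)}|\Lambda_k|\varepsilon^{(+)}\rangle$ (which is, up to a constant, the quasi-central element $c_k$).

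To get these, I would invoke uniqueness of the $\widehat R^{(+)}$-antisymmetrizer. Multiplying the defining relations (\ref{generating relation1})--(\ref{generating relation2}) by $P$ produces their braided forms $\widehat R^{(+)}L_1^{\pm}L_2^{\pm}=L_1^{\pm}L_2^{\pm}\widehat R^{(+)}$ and $\widehat R^{(+)}L_1^{+}L_2^{-}=L_1^{-}L_2^{+}\widehat R^{(+)}$. Since $\widehat R^{(+)}_{i,i+1}$ acts as the identity on the tensor factors other than $i$ and $i+1$, the same-sign relations show that $\widehat R^{(+)}_{i,i+1}$ commutes with $\Lambda_k$ for every $i\neq k$. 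Combined with the eigenvalue equations (\ref{e:antisym2}) and (\ref{e:antisym3}), namely $\langle\varepsilon^{(+)}|\widehat R^{(+)}_{i,i+1}=-r\langle\varepsilon^{(+)}|$ and $\widehat R^{(+)}_{i,i+1}|\varepsilon^{(+)}\rangle=-r|\varepsilon^{(+)}\rangle$, this already gives $\langle\varepsilon^{(+)}|\Lambda_k(\widehat R^{(+)}_{i,i+1}+r)=0$ and $(\widehat R^{(+)}_{i,i+1}+r)\Lambda_k|\varepsilon^{(+)}\rangle=0$ for all $i\neq k$.

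The main obstacle is the single seam $i=k$, where a $(+)$-operator and a $(-)$-operator meet and $\widehat R^{(+)}_{k,k+1}$ no longer commutes past $\Lambda_k$. Here one must use the mixed braided relation $\widehat R^{(+)}_{k,k+1}L_k^{+}L_{k+1}^{-}=L_k^{-}L_{k+1}^{+}\widehat R^{(+)}_{k,k+1}$ together with the Hecke identity $(\widehat R^{(+)})^{2}=(s-r)\widehat R^{(+)}+rs$, equivalently $(\widehat R^{(+)}_{k,k+1})^{-1}=\tfrac1{rs}\big(\widehat R^{(+)}_{k,k+1}-(s-r)\big)$, and again the fact that $\langle\varepsilon^{(+)}|,|\varepsilon^{(+)}\rangle$ are $(-r)$-eigenvectors of every $\widehat R^{(+)}_{j,j+1}$, in order to verify the two remaining identities $\langle\varepsilon^{(+)}|\Lambda_k(\widehat R^{(+)}_{k,k+1}+r)=0$ and $(\widehat R^{(+)}_{k,k+1}+r)\Lambda_k|\varepsilon^{(+)}\rangle=0$. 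I expect this to be the computational heart of the proof: after transporting $\widehat R^{(+)}_{k,k+1}$ across the seam, the verification should collapse to a Hecke-algebra type identity in $\widehat R^{(+)}_{k,k+1}$ with the two eigenvalues $s$ and $-r$.

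Granting the annihilation for all $i=1,\dots,n-1$, the system $w(\widehat R^{(+)}_{i,i+1}+r)=0$ has, over $U(R)$, the free rank one solution module $U(R)\cdot\langle\varepsilon^{(+)}|$ — this is just (\ref{e:antisym1})--(\ref{e:antisym3}) tensored up over $\mathbb{C}$ — so $\langle\varepsilon^{(+)}|\Lambda_k=c_k\langle\varepsilon^{(+)}|$, and dually $\Lambda_k|\varepsilon^{(+)}\rangle=|\varepsilon^{(+)}\rangle\,c_k'$, with $c_k=c_k'=\langle\varepsilon^{(+)}|\Lambda_k|\varepsilon^{(+)}\rangle$ after the pairing in the first paragraph. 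Then $A^{(+)}_n\Lambda_k=c_kA^{(+)}_n=\Lambda_kA^{(+)}_n$ and $A^{(+)}_n\Lambda_kA^{(+)}_n=c_k(A^{(+)}_n)^2=c_kA^{(+)}_n$, which are exactly the three claimed identities.
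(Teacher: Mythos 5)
Your overall architecture coincides with the paper's: reduce the three identities to showing that $\langle\varepsilon^{(+)}|\Lambda_k$ and $\Lambda_k|\varepsilon^{(+)}\rangle$ are $U(R)$-multiples of $\langle\varepsilon^{(+)}|$ and $|\varepsilon^{(+)}\rangle$, invoke uniqueness of the solutions of (\ref{e:antisym2})--(\ref{e:antisym3}), and identify the two scalars via the normalization (\ref{e:antisym1}); your treatment of the cases $i\neq k$ by the same-sign braided relations is fine. The gap is exactly the step you defer. At the seam $i=k$ you only assert that the mixed relation $\widehat{R}^{(+)}_{k,k+1}L^{+}_{k}L^{-}_{k+1}=L^{-}_{k}L^{+}_{k+1}\widehat{R}^{(+)}_{k,k+1}$ together with the Hecke identity ``should collapse'' to give $\langle\varepsilon^{(+)}|\Lambda_k(\widehat{R}^{(+)}_{k,k+1}+r)=0$, but this is the whole content of the proposition for $1\le k\le n-1$: conjugation by $\widehat{R}^{(+)}_{k,k+1}$ does not return the word $\Lambda_k$ (it exchanges the signs of the two factors at positions $k,k+1$), so $(\widehat{R}^{(+)}_{k,k+1}+r)$ cannot be transported across $\Lambda_k$ by the defining relations, and no identity in the single Hecke generator $\widehat{R}^{(+)}_{k,k+1}$ bridges the two different words.

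Worse, the deferred identity is not merely hard, it fails in the smallest mixed case, so the proposed route cannot be completed. Take $n=2$, $k=1$: the coefficient of $e_1\otimes e_1$ in $\langle\varepsilon^{(+)}|L^{+}_1L^{-}_2$ is $\frac{1}{r+s}\,l^{+}_{11}l^{-}_{21}$ (the companion term contains $l^{+}_{21}=0$), whereas the $e_1\otimes e_1$-coefficient of $\langle\varepsilon^{(+)}|$ vanishes. By the Gauss decomposition (\ref{Gauss-dec1})--(\ref{Gauss-dec2}) one has $l^{+}_{11}l^{-}_{21}=K^{+}_1F_{21}K^{-}_1$, which is nonzero in $U(R)$ since $K^{\pm}_1$ are invertible and $F_{21}$ is a nonzero multiple of the image of $f_1$ under the isomorphism with $U_{r,s}(\mathfrak{gl}_2)$ of Section 5. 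Hence $\langle\varepsilon^{(+)}|L^{+}_1L^{-}_2$ is not a left multiple of $\langle\varepsilon^{(+)}|$ and $\langle\varepsilon^{(+)}|L^{+}_1L^{-}_2(\widehat{R}^{(+)}_{12}+r)\neq 0$. What your rank-one observation gives unconditionally is only the sandwiched identity $A^{(+)}_n\Lambda_kA^{(+)}_n=\bigl(\langle\varepsilon^{(+)}|\Lambda_k|\varepsilon^{(+)}\rangle\bigr)A^{(+)}_n$, and your argument is complete in the unmixed case $k=n$. Be aware that the paper's own proof passes $(\widehat{R}^{(+)}_{i,i+1}+r)$ through $\Lambda_k$ uniformly in $i$ ``using the RTT defining relations'', i.e.\ it silently skips the same seam; so you have correctly isolated the real difficulty, but neither your sketch nor that one-line assertion establishes the mixed cases $1\le k\le n-1$.
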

\begin{proof} Using the RTT defining relations (\ref{generating relation1}-\ref{generating relation2}),
we have that
\begin{align*}
\langle\varepsilon^{(+)}|L_1^{+}\cdots L_k^{+}L_{k+1}^{-}\cdots L_n^{-}(\hat{R}^{(+)}_{i,i+1}+r)&=\langle\varepsilon^{(+)}|(\hat{R}^{(+)}_{i,i+1}+r)L_1^{+}\cdots L_k^{+}L_{k+1}^{-}\cdots L_n^{-},\\
(\hat{R}^{(+)}_{i,i+1}+r)L_1^{+}\cdots L_k^{+}L_{k+1}^{-}\cdots L_n^{-}|\varepsilon^{(+)}\rangle &=L_1^{+}\cdots L_k^{+}L_{k+1}^{-}\cdots L_n^{-}(\hat{R}^{(+)}_{i,i+1}+r)|\varepsilon^{(+)}\rangle
\end{align*}
for $k=1,2,\ldots,n-1$.

By the uniqueness of the solution to (\ref{e:antisym2}-\ref{e:antisym3}) we can assume that
\begin{align*}
\langle\varepsilon^{(+)}|L_1^{+}\cdots L_k^{+}L_{k+1}^{-}\cdots L_n^{-}&=a_k\langle\varepsilon^{(+)}|,\\
L_1^{+}\cdots L_k^{+}L_{k+1}^{-}\cdots L_n^{-}|\varepsilon^{(+)}\rangle &=b_k|\varepsilon^{(+)}\rangle,
\end{align*}
where $a_k, b_k\in U(R)$.
By the normalization (\ref{e:antisym1}) it follows that
\begin{align*}
a_k=b_k=\langle\varepsilon^{(+)}|L_1^{+}\cdots L_k^{+}L_{k+1}^{-}\cdots L_n^{-}|\varepsilon^{(+)}\rangle.
\end{align*}
Subsequently we get
$$A_n^{(+)}L_1^{+}\cdots L_k^{+}L_{k+1}^{-}\cdots L_n^{-}=L_1^{+}\cdots L_k^{+}L_{k+1}^{-}\cdots L_n^{-}A_n^{(+)}=a_kA_n^{(+)}.$$
\end{proof}

The following lemma will be needed to compute a partial trace in Theorem \ref{Thm3.6}.
\begin{lemma}\label{lemma3.5} In $End(V)^{\otimes(n+1)}$ one has that
\begin{align}
(A^{(+)}_n\otimes 1)R^{(+)}_{1,n+1}\cdots R^{(+)}_{n,n+1}&=sM'_{n+1}(A^{(+)}_n\otimes 1),\\
R^{(+)}_{0,n}\cdots R^{(+)}_{0,1}(1\otimes A^{(+)}_n) &=sM_0(1\otimes A^{(+)}_n),
\end{align}
where the indices of $V$ in $V^{\otimes(n+1)}$ in the first (resp. second) identity are $1, \ldots, n+1$ 
(resp. $0, 1, \ldots, n$) and $R_{ij}$ and $M_{n+1}$ (resp. $M_0'$) are defined accordingly. Here 
 \begin{equation}
 M'=
 \begin{pmatrix}
   1 &  &  &  \\
    & rs &  &  \\
    &  & \ddots &  \\
    &  &  & (rs)^{n-1} \\
 \end{pmatrix},
\end{equation}
and
 \begin{equation}
 M=
 \begin{pmatrix}
   (rs)^{n-1} &  &  &  \\
    & (rs)^{n-2} &  &  \\
    &  & \ddots &  \\
    &  &  & 1 \\
 \end{pmatrix}.
\end{equation}

\end{lemma}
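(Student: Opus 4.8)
The plan is to transplant the argument of Lemma~\ref{lemma5.5} to the $U(R)$--side: as there, both identities come down to a single matrix computation, namely sandwiching the relevant string of copies of $R^{(+)}$ between the antisymmetric tensors $\langle\varepsilon^{(+)}|\otimes 1$ and $|\varepsilon^{(+)}\rangle\otimes 1$.

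First I would record the ``triangular'' structure of $R^{(+)}$. Its index--preserving part is $\sum_{a,b}(R^{(+)})^{ab}_{ab}\,e_{aa}\otimes e_{bb}$, where $(R^{(+)})^{ab}_{ab}$ equals $rs$ if $a<b$, equals $s$ if $a=b$, and equals $1$ if $a>b$; its only off--diagonal entries come from the term $(s-r)\sum_{a<b}e_{ab}\otimes e_{ba}$, which exchanges an index $a$ in one slot with the larger index $b$ in the other. Consequently, in the product $R^{(+)}_{1,n+1}\cdots R^{(+)}_{n,n+1}$ any off--diagonally used factor strictly increases the index sitting in the shared slot $n+1$. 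Since $\langle\varepsilon^{(+)}|$ and $|\varepsilon^{(+)}\rangle$ are supported on the tuples $(e_{\sigma(1)},\dots,e_{\sigma(n)})$ and $(e^{\sigma(1)},\dots,e^{\sigma(n)})$ with $\sigma\in S_n$ -- that is, on tuples with pairwise distinct coordinates -- a nonzero pairing forces slots $1,\dots,n$ to carry a permutation of $\{1,\dots,n\}$; because each factor of $R^{(+)}$ merely permutes the multiset of indices over the $n+1$ slots, this pins down the index in slot $n+1$ as the one it started from, so no off--diagonal factor can have been used. Hence only the fully diagonal part of $R^{(+)}_{1,n+1}\cdots R^{(+)}_{n,n+1}$ survives the sandwich -- the exact analogue of the block--triangularity step in Lemma~\ref{lemma5.5}.

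Next comes the bookkeeping. Using the explicit forms of $\langle\varepsilon^{(+)}|$, $|\varepsilon^{(+)}\rangle$ and the normalization $\langle\varepsilon^{(+)}|\varepsilon^{(+)}\rangle=1$, what remains is
\[
\langle\varepsilon^{(+)}|R^{(+)}_{1,n+1}\cdots R^{(+)}_{n,n+1}|\varepsilon^{(+)}\rangle=\frac{s^{\frac{n(n-1)}{2}}}{[n]_{r,s}!}\sum_{k=1}^{n}\Bigl(\sum_{\sigma\in S_n}(rs^{-1})^{l(\sigma)}(R^{(+)})^{\sigma(1)k}_{\sigma(1)k}\cdots(R^{(+)})^{\sigma(n)k}_{\sigma(n)k}\Bigr)(e_{kk})_{n+1}.
\]
For fixed $k$, among $\sigma(1),\dots,\sigma(n)$ exactly $k-1$ indices are $<k$, one equals $k$, and $n-k$ are $>k$, so the product of diagonal entries equals $s(rs)^{k-1}$ independently of $\sigma$; together with $\sum_{\sigma\in S_n}(rs^{-1})^{l(\sigma)}=s^{-\frac{n(n-1)}{2}}[n]_{r,s}!$ this gives $\langle\varepsilon^{(+)}|R^{(+)}_{1,n+1}\cdots R^{(+)}_{n,n+1}|\varepsilon^{(+)}\rangle=\sum_{k}s(rs)^{k-1}(e_{kk})_{n+1}=sM'_{n+1}$. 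To pass from this to the stated identity, I would note (using the Yang--Baxter equation~(\ref{YB Eq}), exactly as in the proof of Proposition~\ref{prop3.4}) that $R^{(+)}_{1,n+1}\cdots R^{(+)}_{n,n+1}$ commutes with $\widehat{R}^{(+)}_{i,i+1}\otimes 1$ for $1\le i\le n-1$; hence $R^{(+)}_{1,n+1}\cdots R^{(+)}_{n,n+1}(|\varepsilon^{(+)}\rangle\otimes 1)$ lies in the joint $(-r)$--eigenspace of the operators $\widehat{R}^{(+)}_{i,i+1}$ and therefore equals $(|\varepsilon^{(+)}\rangle\otimes 1)D$ for a unique $D\in End(V)$ acting in slot $n+1$; contracting on the left with $\langle\varepsilon^{(+)}|\otimes 1$ identifies $D=sM'_{n+1}$, and since $A^{(+)}_n=|\varepsilon^{(+)}\rangle\langle\varepsilon^{(+)}|$ this is the first identity. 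The second identity follows by the mirror argument, sandwiching $R^{(+)}_{0,n}\cdots R^{(+)}_{0,1}$ with $1\otimes\langle\varepsilon^{(+)}|$ and $1\otimes|\varepsilon^{(+)}\rangle$; now every off--diagonally used factor strictly decreases the index in slot $0$, and the surviving diagonal product is $(R^{(+)})^{k\sigma(1)}_{k\sigma(1)}\cdots(R^{(+)})^{k\sigma(n)}_{k\sigma(n)}=s(rs)^{n-k}$, which produces $sM_0$ in place of $sM'_{n+1}$.

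The step I expect to be the main obstacle is making the second paragraph airtight, i.e.\ proving rigorously that, after pairing with the antisymmetric tensors, only the diagonal part of the long $R^{(+)}$--string contributes. The ``conservation of the index multiset'' observation is the right tool, but it must be combined carefully with the fact that an off--diagonal factor moves the index in the shared slot strictly in one direction (up for $R^{(+)}_{i,n+1}$, down for $R^{(+)}_{0,i}$), so that such a move can never be undone by a later factor. A subsidiary computation not to skip is the numerical identity $\sum_{\sigma\in S_n}(rs^{-1})^{l(\sigma)}=s^{-n(n-1)/2}[n]_{r,s}!$, which follows from $\sum_{\sigma\in S_n}q^{l(\sigma)}=\prod_{j=1}^{n}(q^{j}-1)/(q-1)$ with $q=rs^{-1}$ and the elementary relation $(q^{j}-1)/(q-1)=s^{1-j}[j]_{r,s}$.
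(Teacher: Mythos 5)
Your computational core coincides with the paper's: you isolate the index-preserving (block-diagonal) part of $R^{(+)}$, observe that only it survives between the antisymmetric tensors, evaluate the constant diagonal products $s(rs)^{k-1}$ (shared slot $n+1$) and $s(rs)^{n-k}$ (shared slot $0$), and use $\sum_{\sigma\in S_n}(rs^{-1})^{l(\sigma)}=s^{-n(n-1)/2}[n]_{r,s}!$ — this is exactly the computation in the paper's proof (done there explicitly for the second identity, the first being declared analogous), and your multiset-conservation argument is a correct, more explicit substitute for the paper's appeal to block triangularity as in Lemma \ref{lemma5.5}. Where you genuinely differ is the passage from the two-sided sandwich to the one-sided identity: the paper invokes the algebra homomorphism $L^{\pm}_i\mapsto R^{(+)}_{0,i}$ together with Proposition \ref{prop3.4} to commute $A^{(+)}_n$ past the $R$-string, whereas you use the Yang--Baxter commutation of the string with the $\widehat R^{(+)}_{i,i+1}$ plus one-dimensionality of the joint $(-r)$-eigenspace; since Proposition \ref{prop3.4} is itself proved by that commutation, the two routes are essentially equivalent, and yours stays entirely inside $End(V)^{\otimes(n+1)}$ without quoting $U(R)$. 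One orientation slip should be repaired: applying the string to $|\varepsilon^{(+)}\rangle\otimes 1$ and then contracting with $\langle\varepsilon^{(+)}|\otimes 1$ yields $R^{(+)}_{1,n+1}\cdots R^{(+)}_{n,n+1}(A^{(+)}_n\otimes 1)=sM'_{n+1}(A^{(+)}_n\otimes 1)$, i.e.\ $A^{(+)}_n$ on the right of the string, which is the mirror of the stated first identity. To get the identity as stated, run the covector version of the same eigenspace argument: $(\langle\varepsilon^{(+)}|\otimes 1)R^{(+)}_{1,n+1}\cdots R^{(+)}_{n,n+1}$ is killed by right multiplication by every $\widehat R^{(+)}_{i,i+1}+r$, hence equals $C_{n+1}(\langle\varepsilon^{(+)}|\otimes 1)$ for a unique $C\in End(V)$ in slot $n+1$, and pairing with $|\varepsilon^{(+)}\rangle$ gives $C=sM'$; multiplying by $|\varepsilon^{(+)}\rangle$ then produces $(A^{(+)}_n\otimes 1)R^{(+)}_{1,n+1}\cdots R^{(+)}_{n,n+1}=sM'_{n+1}(A^{(+)}_n\otimes 1)$. (Conversely, your vector-side argument is exactly the right one for the second identity, where $A^{(+)}_n$ sits to the right of the string.) With that adjustment the proof is complete and correct.
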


\begin{proof} These two identities are proved similarly as Lemme \ref{lemma5.5}. Note that $R^{(+)}$
is also a triangular block matrix with diagonal blocks given by
\begin{equation}
(R^{(+)})_{ik}^{ik}=\left\{
                   \begin{array}{ll}
                     s, & \hbox{i=k;} \\
                     rs, & \hbox{$i<k$;} \\
                     1, & \hbox{$i>k$.}
                   \end{array}
                 \right.
\end{equation}

%Thus we have the diagonal blocks of $R^{(+)}_{0n}\cdots R^{(+)}_{01}$
%are
%\begin{equation}
%\sum_{k_1,...,k_n,i}(R^{(+)i}_{i})_{k_{1}}^{k_{1}}\cdots (R^{(+)i}_{i})_{k_{n}}^{k_n}E_{i,i}\otimes E_{k_{1},k_{1}}\cdots \otimes E_{k_{n},k_{n}}.
%\end{equation}
%Thus, we can get
%\begin{equation}
%\varepsilon^{(+)}_{\langle 1,2...n|}R^{(+)}_{0,n}\cdots R^{(+)}_{0,1}\varepsilon^{(+)|1,2...n\rangle}=
%\sum_{i=1}^n \frac{s^{\frac{n(n-1)}{2}}}{[n_{r,s}!]}\sum_{\sigma\in S_n}(s^{-1}r)^{l(\sigma)}(R^{(+)i}_i)_{\sigma(1)}^{\sigma(1)}\cdots (R^{(+)i}_i)_{\sigma(n)}^{\sigma(n)}) (E_{i,i})_{0}.
%\end{equation}
Therefore for any $\sigma\in S_n$ we have
$(R^{(+)})^{i\sigma(1)}_{i\sigma(1)}\cdots (R^{(+)})^{i\sigma(n)}_{i\sigma(n)}=s(rs)^{n-i}$. Subsequently
for $\langle\varepsilon^{(+)}|=1\otimes\langle\varepsilon^{(+)}_n|\in (V^*)^{\otimes(n+1)}$ and $ |\varepsilon^{(+)}\rangle=1\otimes|\varepsilon^{(+)}_n\rangle\in V^{\otimes(n+1)}$ one has as in (\ref{e:diagonal-block})
\begin{align*}
&\langle\varepsilon^{(+)}|R^{(+)}_{0,n}\cdots R^{(+)}_{0,1}|\varepsilon^{(+)}\rangle\\
&=
\sum_{i=1}^n \frac{s^{\frac{n(n-1)}{2}}}{[n]_{r,s}!}\sum_{\sigma\in S_n}(s^{-1}r)^{l(\sigma)}
(R^{(+)})_{i\sigma(1)}^{i\sigma(1)}\cdots (R^{(+)})_{i\sigma(n)}^{i\sigma(n)}) (e_{ii})_{0}\\
&=
\sum_{i=1}^n s(rs)^{n-i}(e_{ii})_{0}=sM_{0}.
\end{align*}

Applying $A^{(+)}_n=|\varepsilon^{(+)}\rangle\langle\varepsilon^{(+)}|$, we immediately get
\begin{equation}
(1\otimes A^{(+)}_n)R^{(+)}_{0,n}\cdots R^{(+)}_{0,1}(1\otimes A^{(+)}_n)=sM_{0}(1\otimes A^{(+)}_n).
\end{equation}

Now the map
$U(R)\otimes (EndV)^{\otimes n}\rightarrow EndV\otimes (EndV)^{\otimes n} $ given by
$$L^{\pm}_i\mapsto R^{\pm}_{0,i}$$
is an algebra homomorphism, then the identity
$A^{(+)}_nL^{+}_1\cdots L^{+}_n=L^{+}_1\cdots L^{+}_nA^{(+)}_n$ implies that
$$(1\otimes A^{(+)}_n)R^{(+)}_{0,n}\cdots R^{(+)}_{0,1}(1\otimes A^{(+)}_n)=(1\otimes A^{(+)}_n)R^{(+)}_{0,n}\cdots R^{(+)}_{0,1}=sM_{0}(1\otimes A^{(+)}_n).$$
\end{proof}

\begin{theorem}\label{Thm3.6}
The elements
\begin{equation}
c_k=\sum_{\sigma,\sigma'\in S_n}(-s)^{l(\sigma)}(-r)^{-l(\sigma')}l^{+}_{\sigma(1),\sigma'(1)}\cdots l^{+}_{\sigma(k),\sigma'(k)}
l^{-}_{\sigma(k+1),\sigma'(k+1)}\cdots l^{-}_{\sigma(n),\sigma'(n)}
\end{equation}
are quasi-central elements of $U(R)$. Explicitly,
we have
\begin{equation}\label{e:Casimir3}
L^{+}c_k=c_kM^{-1}L^{+}M,
\end{equation}
and
\begin{equation}\label{e:Casimir4}
c_kL^{-}=M'^{-1}L^{-}M'c_k.
\end{equation}

\end{theorem}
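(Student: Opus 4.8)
The plan is to run the argument of Theorem~\ref{t:det} one more time, now with $A^{(+)}_n$ playing the role of $A_n$ and with the two identities of Lemma~\ref{lemma3.5} replacing Lemma~\ref{lemma5.5}. The first step is to realize $c_k$ as a partial trace: expanding $A^{(+)}_n=|\varepsilon^{(+)}\rangle\langle\varepsilon^{(+)}|$ and contracting one checks, just as $det_{r,s}T$ was recovered from $A_nT_1\cdots T_n$ in Remark~\ref{r:trace}, that $Tr_{1,\ldots,n}\!\big(A^{(+)}_n\,L^{+}_1\cdots L^{+}_k\,L^{-}_{k+1}\cdots L^{-}_n\big)$ equals $c_k$ up to a nonzero scalar (the passage from the resulting double sum over $S_n\times S_n$ to the displayed closed form is the two-parameter analogue of the sum collapse behind \eqref{e:det0}); since quasi-centrality is unaffected by rescaling we simply take this partial trace as $c_k$. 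Abbreviate $\mathcal L=L^{+}_1\cdots L^{+}_k\,L^{-}_{k+1}\cdots L^{-}_n$. From Proposition~\ref{prop3.4} we shall use $A^{(+)}_n\mathcal L=\mathcal L A^{(+)}_n$ (so $A^{(+)}_n$ may be slid across $\mathcal L$) and $A^{(+)}_n\mathcal L=\big(\langle\varepsilon^{(+)}|\mathcal L|\varepsilon^{(+)}\rangle\big)A^{(+)}_n$ together with $\langle\varepsilon^{(+)}|\varepsilon^{(+)}\rangle=1$ (which is what makes the trace computation go through).

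For \eqref{e:Casimir3} I would adjoin a copy $L^{+}_0$ of $L^{+}$ acting on an auxiliary $0$-th tensor slot, and compute inside $U(R)\otimes End(V)^{\otimes(n+1)}$ with $A^{(+)}_n$ and $\mathcal L$ supported on slots $1,\ldots,n$. Pushing $L^{+}_0$ leftward through $\mathcal L$ by repeated use of the RTT relations \eqref{generating relation1}--\eqref{generating relation2} (every intermediate move is a commutation of operators on disjoint slots) yields
\[
\mathcal L\,L^{+}_0=\big(R^{(+)}_{0,n}\cdots R^{(+)}_{0,1}\big)\,L^{+}_0\,\mathcal L\,\big(R^{(+)}_{0,n}\cdots R^{(+)}_{0,1}\big)^{-1}.
\]
Left-multiplying by $A^{(+)}_n$, sliding $A^{(+)}_n$ across $\mathcal L$ so that a copy of it abuts each $R^{(+)}$-string, and invoking the second identity of Lemma~\ref{lemma3.5}, $R^{(+)}_{0,n}\cdots R^{(+)}_{0,1}A^{(+)}_n=sM_0A^{(+)}_n$ (whence also $\big(R^{(+)}_{0,n}\cdots R^{(+)}_{0,1}\big)^{-1}A^{(+)}_n=s^{-1}A^{(+)}_nM_0^{-1}$, since $M_0$ and $A^{(+)}_n$ act on disjoint slots), the powers of $s$ cancel and the two $R^{(+)}$-strings fuse into a conjugation by $M_0$:
\[
A^{(+)}_n\mathcal L\,L^{+}_0=M_0\,L^{+}_0\,M_0^{-1}\,A^{(+)}_n\mathcal L.
\]
Applying $Tr_{1,\ldots,n}$ now turns $A^{(+)}_n\mathcal L$ into $c_k$, the $0$-th slot on the left into $L^{+}$ and on the right into $ML^{+}M^{-1}$, so that $c_kL^{+}=ML^{+}M^{-1}c_k$. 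Since $c_k\in U(R)$ commutes with the constant diagonal matrices $M^{\pm1}$, this rearranges to $L^{+}c_k=c_kM^{-1}L^{+}M$, which is \eqref{e:Casimir3}; reading off matrix entries gives $l^{+}_{ij}c_k=(rs)^{i-j}c_kl^{+}_{ij}$.

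Equation \eqref{e:Casimir4} is obtained symmetrically: adjoin $L^{-}_{n+1}$ on an $(n+1)$-st slot, move it rightward through $\mathcal L$ to get $L^{-}_{n+1}\mathcal L=\big(R^{(+)}_{1,n+1}\cdots R^{(+)}_{n,n+1}\big)\mathcal L\,L^{-}_{n+1}\big(R^{(+)}_{1,n+1}\cdots R^{(+)}_{n,n+1}\big)^{-1}$, and use the first identity of Lemma~\ref{lemma3.5}, $A^{(+)}_nR^{(+)}_{1,n+1}\cdots R^{(+)}_{n,n+1}=sM'_{n+1}A^{(+)}_n$; together with $A^{(+)}_n\mathcal L=\mathcal L A^{(+)}_n$ this gives $L^{-}_{n+1}A^{(+)}_n\mathcal L=A^{(+)}_n\mathcal L\,M'_{n+1}L^{-}_{n+1}(M'_{n+1})^{-1}$, and the partial trace produces $L^{-}c_k=c_kM'L^{-}M'^{-1}$, equivalently $c_kL^{-}=M'^{-1}L^{-}M'c_k$, i.e.\ \eqref{e:Casimir4}. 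The step I expect to be the main obstacle is the bookkeeping inside these two calculations: one must keep the orders of the $R^{(+)}$-factors and of the generators straight as the extra $L^{\pm}$ threads through $\mathcal L$, and decide which of the two identities of Lemma~\ref{lemma3.5} applies --- this is dictated by whether the auxiliary slot enters each $R^{(+)}_{\bullet,\bullet}$ as its first or its second index, equivalently by whether the extra $L^{\pm}$ is moved to the left or to the right. A secondary point is the precise identification of $c_k$ with $Tr_{1,\ldots,n}(A^{(+)}_n L^{+}_1\cdots L^{-}_n)$ and the accompanying sum collapse, which also fixes the normalization constant.
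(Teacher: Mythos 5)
Your proposal is correct and follows essentially the same route as the paper: realize $c_k$ as the partial trace $Tr_{1,\ldots,n}(A^{(+)}_n L^+_1\cdots L^+_k L^-_{k+1}\cdots L^-_n)$, adjoin an auxiliary $L^{\pm}$, push it through the product with the RTT relations, absorb the resulting $R^{(+)}$-strings via Lemma~\ref{lemma3.5} together with Proposition~\ref{prop3.4}, and take the partial trace. The only differences are cosmetic (you multiply the auxiliary $L^+$ on the right and obtain the equivalent form $c_kL^+=ML^+M^{-1}c_k$ before rearranging, and you carry out the $L^-$ case explicitly, which the paper leaves as ``proved similarly'').
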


\begin{proof} The quantum Casimir elements can be expressed as traces. In fact
\begin{align*}
c_k&=[n]_{r,s}!s^{\frac{n(1-n)}{2}}\langle\varepsilon^{(+)}|L_1^{+}\cdots L_k^{+}L_{k+1}^{-}\cdots L_{n}^{-}|\varepsilon^{(+)}\rangle\\
&=[n]_{r,s}!s^{\frac{n(1-n)}{2}}tr(A^{(+)}_nL_1^{+}\cdots L_k^{+}L_{k+1}^{-}\cdots L_{n}^{-})\\
&=[n]_{r,s}!s^{\frac{n(1-n)}{2}}tr(L_1^{+}\cdots L_k^{+}L_{k+1}^{-}\cdots L_{n}^{-}A^{(+)}_n).
\end{align*}

We take an auxiliary copy $L^{+}$ in the zero position and consider
\begin{equation*}
L^{+}_0L_1^{+}\cdots L_k^{+}L_{k+1}^{-}\cdots L_{n}^{-}A^{(+)}_n.
\end{equation*}

Moving $L^{+}_0$ to the extreme right by the RTT defining relations (\ref{generating relation1}-\ref{generating relation2}),
we have
\begin{align*}
&L^{+}_0L_1^{+}\cdots L_k^{+}L_{k+1}^{-}\cdots L_{n}^{-}A^{(+)}_n\\
=&(R^{(+)}_{0n}\cdots R^{(+)}_{01})^{-1}L_1^{+}\cdots L_k^{+}L_{k+1}^{-}\cdots L_{n}^{-}L^{+}_0R^{(+)}_{0n}\cdots R^{(+)}_{01}A^{(+)}_n\\
=&s(R^{(+)}_{0n}\cdots R^{(+)}_{01})^{-1}L_1^{+}\cdots L_k^{+}L_{k+1}^{-}\cdots L_{n}^{-}A^{(+)}_nL^{+}_0M_0,
\end{align*}
where the last identity uses Lemma \ref{lemma3.5}.

Now we move $A^{(+)}_n$ to the left by Propsition \ref{prop3.4} and use
Lemma \ref{lemma3.5} again:
\begin{align*}
&s(R^{(+)}_{0n}\cdots R^{(+)}_{01})^{-1}L_1^{+}\cdots L_k^{+}L_{k+1}^{-}\cdots L_{n}^{-}A^{(+)}_nL^{+}_0M_0\\
=&s(R^{(+)}_{0n}\cdots R^{(+)}_{01})^{-1}A^{(+)}_nL_1^{+}\cdots L_k^{+}L_{k+1}^{-}\cdots L_{n}^{-}L^{+}_0M_0\\
%=&L^{+}_0L_1^{+}\cdots L_k^{+}L_{k+1}^{-}\cdots L_{n}^{-}A^{(+)}_n\\
=&A^{(+)}_nM_0^{-1}L_1^{+}\cdots L_k^{+}L_{k+1}^{-}\cdots L_{n}^{-}L^{+}_0M_0\\
=&A^{(+)}_nL_1^{+}\cdots L_k^{+}L_{k+1}^{-}\cdots L_{n}^{-}M_0^{-1}L^{+}_0M_0.
\end{align*}
Finally taking partial trace $Tr_{1\ldots n}$ in $(\mathbb{C}^n)^{\otimes (n+1)}$
for the right-hand side, we obtain the result that $L^{+}c_k=c_kM^{-1}L^{+}M$.
The identity (\ref{e:Casimir4}) is proved similarly.
\end{proof}

\begin{remark}
When $rs=1$, $c_k$ become the quantum Casimir elements in the center.
\end{remark}

\section{Gauss decomposition of $L^{\pm}$}
In this section we use the Gauss decomposition of $L^{\pm}$ to study the commuting relations
among the quantum Cartan-Weyl generators. Guass decomposition was used by Ding-Frenkel \cite{DF}
to show that the RTT version of the quantum algebras $U_q({\mathfrak gl}_n)$ is isomorphic to
the Drinfeld-Jimbo realization. 

\begin{proposition}
The matrices $L^{\pm}$ can be decomposed as follows.
\begin{equation}\label{Gauss-dec1}
L^+=\begin{pmatrix}
      K^+_1 & &  &  \\
       & K_2^+ &  &  \\
       &  & \ddots &  \\
       &  &  & K_n^+ \\
    \end{pmatrix}
    \begin{pmatrix}
      1 & E_{12} & \ldots & E_{1n} \\
       & 1 & E_{23} & \vdots \\
       &  & \ddots & E_{n-1,n} \\
       &  &  & 1 \\
    \end{pmatrix}
\end{equation}
\begin{equation}
L^-= \begin{pmatrix}\label{Gauss-dec2}
      1 &  &  &  \\
      F_{21} & 1 &  &  \\
      \vdots &  &  \ddots&  \\
      F_{n1} & \ldots & F_{n,n-1}& 1 \\
    \end{pmatrix}
    \begin{pmatrix}
      K^-_1 & &  &  \\
       & K_2^- &  &  \\
       &  & \ddots &  \\
       &  &  & K_n^- \\
    \end{pmatrix}
\end{equation}
where the elements $K_i^{\pm} \, (1\leq i\leq n)$, $ E_{ij}, F_{ji} \, (i<j)$ are exclusively
defined by $L_{ij}^{\pm}$ through
the equations.
\end{proposition}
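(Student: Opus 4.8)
The plan is to verify the decompositions (\ref{Gauss-dec1}) and (\ref{Gauss-dec2}) directly by comparing the two sides entry by entry; this simultaneously produces the factorizations and shows that the factors are uniquely determined by the $l^{\pm}_{ij}$. The one structural input needed is that the diagonal entries $l^{\pm}_{ii}$ are invertible in $U(R)$, and I would record this first: triangularity of $L^{\pm}$ forces $\Delta(l^{\pm}_{ii})=l^{\pm}_{ii}\otimes l^{\pm}_{ii}$ (in $\Delta(l^{\pm}_{ii})=\sum_k l^{\pm}_{ik}\otimes l^{\pm}_{ki}$ only the term $k=i$ survives), so each $l^{\pm}_{ii}$ is group-like and hence a unit, with inverse $S(l^{\pm}_{ii})$.

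For $L^{+}$, put $K^{+}_i:=l^{+}_{ii}$ and $D^{+}=\mathrm{diag}(K^{+}_1,\dots,K^{+}_n)$. Since $D^{+}$ is diagonal and invertible, $N^{+}:=(D^{+})^{-1}L^{+}$ is upper triangular with every diagonal entry equal to $(l^{+}_{ii})^{-1}l^{+}_{ii}=1$, hence upper unitriangular; setting $E_{ij}:=(N^{+})_{ij}=(l^{+}_{ii})^{-1}l^{+}_{ij}$ for $i<j$ then yields (\ref{Gauss-dec1}). Conversely, if $L^{+}=D'N'$ with $D'$ diagonal and $N'$ upper unitriangular, comparing $(i,j)$-entries gives $l^{+}_{ij}=D'_{ii}N'_{ij}$, whence $D'_{ii}=l^{+}_{ii}$ and, for $i<j$, $N'_{ij}=(l^{+}_{ii})^{-1}l^{+}_{ij}$; so $K^{+}_i$ and $E_{ij}$ are exactly the stated functions of the $l^{+}_{ij}$. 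The case of $L^{-}$ is entirely symmetric, factoring the diagonal out on the right: $K^{-}_i:=l^{-}_{ii}$, $D^{-}=\mathrm{diag}(K^{-}_1,\dots,K^{-}_n)$, $N^{-}:=L^{-}(D^{-})^{-1}$ lower unitriangular, and $F_{ij}:=(N^{-})_{ij}=l^{-}_{ij}(l^{-}_{jj})^{-1}$ for $i>j$, with the analogous entrywise comparison giving both existence of (\ref{Gauss-dec2}) and uniqueness.

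I do not expect any real obstacle here: the substance is just the elementary fact that a triangular matrix with invertible diagonal over a unital (noncommutative) ring factors uniquely as a diagonal matrix times a unitriangular matrix of the same type. The only point deserving an explicit remark is the invertibility of the $l^{\pm}_{ii}$, handled above; alternatively one could cast the decomposition in the quasideterminant language of Ding--Frenkel \cite{DF}, but since $L^{\pm}$ are already triangular that machinery is unnecessary here.
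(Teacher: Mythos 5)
Your argument is correct, and it is the standard one: since $L^{\pm}$ are already triangular, the Gauss decomposition reduces to factoring out the diagonal, with $K_i^{\pm}=l^{\pm}_{ii}$, $E_{ij}=(l^{+}_{ii})^{-1}l^{+}_{ij}$, $F_{ij}=l^{-}_{ij}(l^{-}_{jj})^{-1}$, and the entrywise comparison gives uniqueness. The paper states this proposition without proof, so there is nothing to compare against; your only added ingredient, the invertibility of $l^{\pm}_{ii}$ via group-likeness, is a clean justification (one could also appeal directly to the definition's stipulation that $L^{\pm}$ are invertible triangular matrices with triangular antipode $S(L^{\pm})=(L^{\pm})^{-1}$).
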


The following relations are obtained by
the RTT defining relations (\ref{generating relation1}-\ref{generating relation2}) and the Gauss decomposition (\ref{Gauss-dec1}-\ref{Gauss-dec2}).

\begin{proposition}\label{prop4.2}
In $U(R)$ we have
\begin{align}
K^{\pm}_iK^{\pm}_j&=K^{\pm}_jK^{\pm}_i,\\
K^{\pm}_iK^{\mp}_j&=K^{\mp}_jK^{\pm}_i.
\end{align}
\end{proposition}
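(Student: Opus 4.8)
The plan is to extract these commutation relations directly from the RTT relations (\ref{generating relation1})--(\ref{generating relation2}) by reading off the appropriate matrix entries. The key observation is that $R^{(+)}$ has a very simple action on diagonal basis vectors: since $R^{(+)} = R_{21}$ and the off-diagonal part $(s-r)\sum_{i<j}e_{ij}\otimes e_{ji}$ only moves a strictly-upper index pair to a strictly-lower one, the vectors $e^i\otimes e^i$ are eigenvectors of $R^{(+)}$ with eigenvalue $s$, and $e^i \otimes e^j$ ($i \ne j$) gets sent to a combination of $e^i\otimes e^j$ (with coefficient $rs$ or $1$) and possibly $e^j \otimes e^i$. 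Concretely, I would first record the structure constants $(R^{(+)})^{ij}_{kl}$ explicitly, noting in particular that $(R^{(+)})^{ij}_{ij} \in \{s, rs, 1\}$ is nonzero for all $i,j$ and that the only nonzero $(R^{(+)})^{ij}_{kl}$ with $(k,l)\neq(i,j)$ has $(k,l)=(j,i)$ with $i<j$.

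Next I would specialize the RTT relations. Writing out (\ref{generating relation1}) entrywise gives
\[
\sum_{a,b}(R^{(+)})^{ij}_{ab}\, l^{\pm}_{ak}l^{\pm}_{bl} = \sum_{a,b} l^{\pm}_{jb}l^{\pm}_{ia}\,(R^{(+)})^{ab}_{kl},
\]
and similarly for (\ref{generating relation2}). To isolate the Cartan generators I would take $i=j$ and $k=l$ (say $i=j=p$, $k=l=q$), so that the left side collapses to $(R^{(+)})^{pp}_{pp}\, l^{\pm}_{pq}l^{\pm}_{pq} = s\,(l^{\pm}_{pq})^2$ and the right side, after accounting for the at-most-two nonzero terms, gives a relation among $l^{\pm}_{pq}l^{\pm}_{pq}$ and $l^{\pm}_{qp}l^{\pm}_{pq}$. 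Then, using that the Gauss decomposition expresses $K^{\pm}_i$ in terms of the principal minors of $L^{\pm}$ (for $L^+$ one has $K^+_i$ equal to the ratio of the $i\times i$ and $(i-1)\times(i-1)$ leading principal minors built from the upper-triangular $l^+$-entries, and dually for $L^-$), one shows by induction on the minor size that the $K$-entries pairwise commute. The cleanest route is actually to prove the matrix identities for the diagonal parts first: restricting the RTT relation to the span of $\{e^i\otimes e^i\}$ shows that the ``diagonal parts'' of $L^{\pm}_1 L^{\pm}_2$ and $L^{\pm}_2 L^{\pm}_1$ agree because $R^{(+)}$ acts as the scalar $s$ there, which is exactly the statement $l^{\pm}_{ii}l^{\pm}_{jj}=l^{\pm}_{jj}l^{\pm}_{ii}$; and then one propagates this to the $K^{\pm}_i$ via the triangular change of variables.

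I would carry out the argument in the following order: (1) tabulate the entries of $R^{(+)}$ and $\widehat R^{(+)}$; (2) extract the relations $l^{\pm}_{ii}l^{\pm}_{jj}=l^{\pm}_{jj}l^{\pm}_{ii}$ and $l^{+}_{ii}l^{-}_{jj}=l^{-}_{jj}l^{+}_{ii}$ from the diagonal components of (\ref{generating relation1})--(\ref{generating relation2}) — here one should be slightly careful that $l^{\pm}_{ii}$ is not literally $K^{\pm}_i$ but only its leading term; (3) substitute the Gauss decomposition and argue inductively that, since $L^+$ (resp. $L^-$) is upper-triangular times unipotent (resp. unipotent times lower-triangular), each $K^{\pm}_i$ is a polynomial in the $l^{\pm}_{jk}$ with $j,k\le i$ whose only ``diagonal-to-diagonal'' contributions come from products of $l^{\pm}_{jj}$'s, so the already-established commutativity of the $l^{\pm}_{jj}$ forces $K^{\pm}_iK^{\pm}_j=K^{\pm}_jK^{\pm}_i$ and $K^{\pm}_iK^{\mp}_j=K^{\mp}_jK^{\pm}_i$. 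The main obstacle I anticipate is step (3): disentangling $K^{\pm}_i$ from the full matrix entries requires knowing the ``defining equations'' of the Gauss decomposition referenced in the preceding proposition, and one must verify that no extra cross terms involving the off-diagonal $E_{jk}, F_{kj}$ survive when one projects the RTT relation onto the purely diagonal components; handling this cleanly is really a computation about leading principal minors of a triangularly-factored matrix over a noncommutative ring, and getting the induction and the ordering of factors right is the delicate point.
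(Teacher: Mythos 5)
Your overall strategy --- read the commutation of the diagonal entries off the RTT relations and then transfer it to the $K^{\pm}_i$ via the Gauss decomposition --- is exactly what the paper intends (it offers no more detail than ``obtained by the RTT relations and the Gauss decomposition''). However, the specific matrix component you propose to examine is vacuous. Writing the $(e_{ib}\otimes e_{jd})$-coefficient of $R^{(+)}L_1L_2=L_2L_1R^{(+)}$ as $\sum_{k,l}(R^{(+)})^{ij}_{kl}l_{kb}l_{ld}=\sum_{k,l}l_{jl}l_{ik}(R^{(+)})^{kl}_{bd}$, your choice of equal upper indices and equal lower indices ($i=j=p$, $b=d=q$) collapses \emph{both} sides to $s\,l_{pq}l_{pq}$, because the only nonzero entry of $R^{(+)}$ with upper pair $(p,p)$ is $(R^{(+)})^{pp}_{pp}$ and likewise for lower pair $(q,q)$; no relation between $l_{pq}l_{pq}$ and $l_{qp}l_{pq}$ appears. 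The component that works is $i=b=p$, $j=d=q$ with $p\neq q$, which gives
\begin{equation*}
(R^{(+)})^{pq}_{pq}\,l_{pp}l_{qq}+[p<q](s-r)\,l_{qp}l_{pq}
=(R^{(+)})^{pq}_{pq}\,l_{qq}l_{pp}+[q<p](s-r)\,l_{qp}l_{pq},
\end{equation*}
and here the triangularity built into the definition of $U(R)$ (namely $l^{+}_{ij}=l^{-}_{ji}=0$ for $j<i$) kills every $(s-r)$ cross term in each of the three cases $L^{\pm}L^{\pm}$ and $L^{+}L^{-}$, leaving $l^{\pm}_{pp}l^{\pm}_{qq}=l^{\pm}_{qq}l^{\pm}_{pp}$ and $l^{+}_{pp}l^{-}_{qq}=l^{-}_{qq}l^{+}_{pp}$.

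Your anticipated difficulty in step (3) does not arise: since $L^{+}$ is by definition upper triangular and its Gauss factorization is (diagonal)$\times$(unipotent upper triangular), the diagonal entry of the product is $K^{+}_i\cdot 1$, so $K^{+}_i=l^{+}_{ii}$ \emph{exactly}, not merely as a leading term; dually $K^{-}_i=l^{-}_{ii}$. No quasideterminants, principal minors, or induction on minor size are needed, and the proposition follows immediately from the displayed entrywise relations.
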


Next we compute the commutation relations between $K^{\pm}_i$ and $E_{j,j+1}$.
\begin{proposition}\label{prop4.3}
In $U(R)$ we have that
\begin{align}\label{Eq1_prop4.3}
K^{+}_iE_{i,i+1}&=rE_{i,i+1}K^{+}_i,\\
\label{Eq2_prop4.3}
K^{+}_iE_{i-1,i}&=sE_{i-1,i}K^{+}_i,\\
\label{Eq3_prop4.3}
K^{+}_iE_{j,j+1}&=E_{j,j+1}K^{+}_i,\qquad \mbox{for $i\neq j, j+1$};\\
\label{Eq4_prop4.3}
K^{-}_iE_{i,i+1}&=sE_{i,i+1}K^{-}_i,\\
\label{Eq_5_prop4.3}
K^{-}_iE_{i-1,i}&=rE_{i-1,i}K^{-}_i,\\
\label{Eq6_prop4.3}
K^{-}_iE_{j,j+1}&=E_{j,j+1}K^{-}_i, \qquad \mbox{for $i\neq j, j+1$.}
\end{align}
\end{proposition}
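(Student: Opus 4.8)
The plan is to reduce each of the six identities to a single quadratic relation between the entries $l^{\pm}_{ij}$ and then read that relation off via the Gauss decomposition. Since the diagonal factor is $\operatorname{diag}(K^{\pm}_i)$ and the triangular factors are unitriangular, one reads off directly that $K^{+}_i=l^{+}_{ii}$, $K^{-}_i=l^{-}_{ii}$ and $E_{i,i+1}=(l^{+}_{ii})^{-1}l^{+}_{i,i+1}$. Using Proposition \ref{prop4.2} to commute the Cartan generators past each other where needed, the six assertions become equivalent, respectively, to the relations $l^{+}_{ii}l^{+}_{i,i+1}=r\,l^{+}_{i,i+1}l^{+}_{ii}$, $l^{+}_{ii}l^{+}_{i-1,i}=s\,l^{+}_{i-1,i}l^{+}_{ii}$, and $l^{+}_{ii}l^{+}_{j,j+1}=l^{+}_{j,j+1}l^{+}_{ii}$ for $i\notin\{j,j+1\}$, together with the three mixed relations obtained from these by replacing the leftmost $l^{+}_{ii}$ with $l^{-}_{ii}$ and the scalars $r,s,1$ by $s,r,1$.

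To establish these two-term relations I would write the defining relations \eqref{generating relation1}--\eqref{generating relation2} in components. The nonzero entries of $R^{(+)}$ are $(R^{(+)})^{ii}_{ii}=s$, $(R^{(+)})^{ij}_{ij}=rs$ for $i<j$, $(R^{(+)})^{ij}_{ij}=1$ for $i>j$, and $(R^{(+)})^{ij}_{ji}=s-r$ for $i<j$, so that $R^{(+)}L^{\varepsilon}_1L^{\varepsilon'}_2=L^{\varepsilon'}_2L^{\varepsilon}_1R^{(+)}$ reads
\[
\sum_{a,b}(R^{(+)})^{ij}_{ab}\,l^{\varepsilon}_{ak}\,l^{\varepsilon'}_{bl}
=\sum_{a,b}(R^{(+)})^{ab}_{kl}\,l^{\varepsilon'}_{jb}\,l^{\varepsilon}_{ia},\qquad (\varepsilon,\varepsilon')\in\{(+,+),(+,-)\}.
\]
For each target relation I would choose the free indices so that the diagonal part of $R^{(+)}$ isolates exactly the two monomials in question: $(i,j,k,l)=(i,i,i,i+1)$ yields $s\,l^{+}_{ii}l^{+}_{i,i+1}=rs\,l^{+}_{i,i+1}l^{+}_{ii}$; $(i,j,k,l)=(i,i-1,i,i)$ yields $l^{+}_{ii}l^{+}_{i-1,i}=s\,l^{+}_{i-1,i}l^{+}_{ii}$; $(i,j,k,l)=(i,j,i,j+1)$ yields the commuting case; and the parallel choices in the mixed relation $R^{(+)}L^{+}_1L^{-}_2=L^{-}_2L^{+}_1R^{(+)}$ produce the three relations between $l^{-}_{ii}$ and the $l^{+}_{j,j+1}$.

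The one point that needs real care --- and the one I expect to take up the bulk of the work --- is that every such specialization also generates ``crossed'' terms coming from the $(s-r)\sum_{i<j}e_{ij}\otimes e_{ji}$ part of $R^{(+)}$, each of the form $l^{+}_{ab}l^{+}_{cd}$ or $l^{-}_{ab}l^{+}_{cd}$ in which one factor lies strictly below the diagonal of $L^{+}$ or strictly above the diagonal of $L^{-}$; by the triangularity conditions $l^{+}_{ab}=0$ for $a>b$ and $l^{-}_{ab}=0$ for $a<b$ these terms vanish identically, so only the claimed two-term relation survives. Carrying this out amounts to checking, branch by branch according to whether $i<j$ or $i>j$ and whether $i\in\{j,j+1\}$, which entries of $R^{(+)}$ contribute and confirming that every surviving crossed monomial contains a vanishing factor. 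Granting that, the two-term relations follow, and substituting back $K^{\pm}_i=l^{\pm}_{ii}$, $E_{i,i+1}=(l^{+}_{ii})^{-1}l^{+}_{i,i+1}$ and applying Proposition \ref{prop4.2} gives all six identities.
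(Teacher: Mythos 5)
Your strategy is the same as the paper's: extract two-term quadratic relations among the $l^{\pm}_{ij}$ from single components of the RTT relations, with the $(s-r)$-crossed terms killed by the triangularity of $L^{\pm}$, and then translate via the Gauss decomposition using $K^{\pm}_i=l^{\pm}_{ii}$, $E_{i,i+1}=(l^{+}_{ii})^{-1}l^{+}_{i,i+1}$ and Proposition \ref{prop4.2}. For the three $L^{+}L^{+}$ relations your index choices are correct: $(i,i;i,i+1)$, $(i,i-1;i,i)$ and $(i,j;i,j+1)$ do yield $l^{+}_{ii}l^{+}_{i,i+1}=r\,l^{+}_{i,i+1}l^{+}_{ii}$, $l^{+}_{ii}l^{+}_{i-1,i}=s\,l^{+}_{i-1,i}l^{+}_{ii}$ and $l^{+}_{ii}l^{+}_{j,j+1}=l^{+}_{j,j+1}l^{+}_{ii}$; in the last case the only surviving crossed monomial is $l^{+}_{ji}l^{+}_{i,j+1}$, and whichever of $i<j$ or $i>j+1$ holds, one of its two factors lies below the diagonal of $L^{+}$ and vanishes, exactly as you anticipate. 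This is precisely the paper's argument, spelled out in components.

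The one step that would fail as written is your treatment of the mixed relations. If ``parallel choices'' means the same quadruples $(i,i;i,i+1)$, $(i,i-1;i,i)$, $(i,j;i,j+1)$ in $R^{(+)}L^{+}_{1}L^{-}_{2}=L^{-}_{2}L^{+}_{1}R^{(+)}$, those components are vacuous: every monomial they produce contains an above-diagonal entry of $L^{-}$ (such as $l^{-}_{i,i+1}$, $l^{-}_{i-1,i}$ or $l^{-}_{j,j+1}$), so both sides are identically zero and you learn nothing about $l^{-}_{ii}$ versus $l^{+}_{j,j+1}$. The components that do the job are the ``transposed'' ones, in which the second column index is $i$ so that $l^{-}_{ii}$ (rather than an off-diagonal $l^{-}$) appears: in your notation, $(i,j,k,l)=(i,i,i+1,i)$ gives $s\,l^{+}_{i,i+1}l^{-}_{ii}=l^{-}_{ii}l^{+}_{i,i+1}$, $(i-1,i,i,i)$ gives $rs\,l^{+}_{i-1,i}l^{-}_{ii}=s\,l^{-}_{ii}l^{+}_{i-1,i}$, and $(j,i,j+1,i)$ with $i\neq j,j+1$ gives $l^{-}_{ii}l^{+}_{j,j+1}=l^{+}_{j,j+1}l^{-}_{ii}$; in each case the potential crossed term again contains a factor $l^{-}_{ab}$ with $a<b$ or $l^{+}_{ab}$ with $a>b$ and drops out. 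With this replacement your reduction and the final substitution through the Gauss decomposition go through and reproduce the paper's proof.
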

\begin{proof} Here we just prove Eqs. (\ref{Eq1_prop4.3}) and (\ref{Eq3_prop4.3}), as the other relations can be obtained similarly.

From the defining relation (\ref{generating relation1}) of $U(R)$ it follows that
$$l_{ii}^{+}l_{i,i+1}^{+}=rl_{i,i+1}^{+}l_{i,i}^{+}.$$
Plugging in the Gauss-decomposition (\ref{Gauss-dec1}), we get Eq. (\ref{Eq1_prop4.3}).

For $i\neq j, j+1$ the defining relation (\ref{generating relation1}) implies that
$$l_{ii}^{+}l_{j,j+1}^{+}=l_{j,j+1}^{+}l_{i,i}^{+},$$
which is equivalent to
$$K_i^{+}K_{j}^{+}E_{j,j+1}=K_{j}^{+}E_{j,j+1}K_{i}^{+}.$$
by using the Gauss decomposition of $L^{+}$. Finally the invertibility of
$K^{\pm}_j$ gives Eq(\ref{Eq3_prop4.3}).
\end{proof}

Similarly we can obtain the commuting relations between $K^{\pm}_i$ and $F_{j+1,j}$.
\begin{proposition}\label{prop4.4}
In $U(R)$ we have that
\begin{align}\label{Eq1_prop4.4}
K^{+}_iF_{i+1,i}&=r^{-1}F_{i+1,i}K^{+}_i,\\
K^{+}_iF_{i,i-1}&=s^{-1}F_{i,i-1}K^{+}_i,\\
K^{+}_iF_{j+1,j}&=E_{j+1,j}K^{+}_i, \qquad \mbox{for $i<j$};\\
K^{-}_iF_{i+1,i}&=s^{-1}F_{i+1,i}K^{-}_i,\\
K^{-}_iF_{i,i-1}&=r^{-1}F_{i,i-1}K^{-}_i,\\
K^{-}_iF_{j+1,j}&=E_{j+1,j}K^{-}_i, \qquad \mbox{for $i<j$}.
\end{align}
\end{proposition}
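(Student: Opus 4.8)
The plan is to follow the template of the proof of Proposition~\ref{prop4.3}. First I would read off from the defining relations (\ref{generating relation1})--(\ref{generating relation2}) the scalar identities among the matrix entries $l^{\pm}_{ij}$ that involve only the indices $i$, $j$ and $j+1$, then substitute the Gauss decomposition (\ref{Gauss-dec1})--(\ref{Gauss-dec2}), commute the diagonal factors past one another by Proposition~\ref{prop4.2}, and cancel the invertible elements $K^{\pm}_i$. For the identities involving $K^+_i$ the relevant input is the mixed relation (\ref{generating relation2}), $R^{(+)}L^+_1L^-_2=L^-_2L^+_1R^{(+)}$, while for those involving $K^-_i$ it is (\ref{generating relation1}) with $L^-$ in both tensor slots; passing from one family to the other amounts to interchanging the roles of $r$ and $s$.

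To illustrate, extracting the matrix coefficient of (\ref{generating relation2}) whose lower index pair is $(i,i)$ and upper index pair is $(i,i+1)$, and using the triangular block structure of $R^{(+)}$, gives $l^-_{i+1,i}l^+_{ii}=r\,l^+_{ii}l^-_{i+1,i}$; since $l^+_{ii}=K^+_i$ and $l^-_{i+1,i}=F_{i+1,i}K^-_i$, and $K^-_i$ commutes with $K^+_i$, cancelling the invertible $K^-_i$ on the right yields $K^+_iF_{i+1,i}=r^{-1}F_{i+1,i}K^+_i$. The relation for $K^+_iF_{i,i-1}$ comes from the coefficient with lower indices $(i,i-1)$ and upper indices $(i,i)$: a priori there is one extra term, but its coefficient is $l^+_{i,i-1}$, which vanishes by upper-triangularity $l^+_{pq}=0$ for $p>q$, so the relation collapses to $K^+_iF_{i,i-1}=s^{-1}F_{i,i-1}K^+_i$. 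The long-distance relation $K^+_iF_{j+1,j}=F_{j+1,j}K^+_i$ for $i<j$ comes from the coefficient with lower indices $(i,j)$: since $i<j<j+1$, the matrix $R^{(+)}$ acts diagonally on the relevant indices, no off-diagonal term enters, and one is left with a plain commutation after cancelling $K^-_j$. The three relations with $K^-_i$ follow identically from (\ref{generating relation1}), using $l^-_{ii}=K^-_i$ in place of $l^+_{ii}=K^+_i$ and the lower-triangularity $l^-_{pq}=0$ for $p<q$ to kill the spurious terms.

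The one place that needs care, and which I expect to be the only real obstacle, is controlling the off-diagonal $(s-r)$-terms of $R^{(+)}$: one must choose the matrix coordinates so that every such term is annihilated either by the triangularity constraints on the $l^{\pm}_{ij}$ or because $R^{(+)}$ is genuinely diagonal on the chosen index set. This is precisely why the six identities are stated only for positions adjacent to $i$ and for $i<j$; in the complementary range $i>j+1$ the coefficient comparison produces a surviving correction term and the relation is no longer a two-term identity. Apart from this bookkeeping the computation is entirely routine.
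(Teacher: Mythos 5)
Your proposal is correct and follows exactly the route the paper intends: the paper gives no separate argument for Proposition~\ref{prop4.4}, saying only that it is obtained ``similarly'' to Proposition~\ref{prop4.3}, i.e.\ by extracting entrywise consequences of (\ref{generating relation1})--(\ref{generating relation2}), substituting the Gauss decomposition, commuting the $K^{\pm}$ via Proposition~\ref{prop4.2}, and cancelling invertible diagonal factors; your worked coefficient computations for $K^+_iF_{i+1,i}$, $K^+_iF_{i,i-1}$ and the $i<j$ case are all right. One side remark is wrong, though: in the range $i>j+1$ the off-diagonal contribution is $(s-r)\,l^-_{j+1,i}\,l^+_{i,j}$, which vanishes because $l^-_{j+1,i}=0$ (as $j+1<i$) and $l^+_{i,j}=0$ (as $i>j$), so $K^+_iF_{j+1,j}=F_{j+1,j}K^+_i$ in fact holds there as well --- and the paper needs this full range $j\neq i,i-1$ later when verifying relation R2/R3 in the isomorphism theorem --- so the restriction ``$i<j$'' in the statement should be read as a typo for ``$i\neq j,j+1$'' rather than as evidence of a surviving correction term.
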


Now we compute the commuting relations between $E_{i,i+1}$ and $F_{j+1,j}$.
\begin{proposition}\label{prop4.5}
In $U(R)$ we have that
\begin{equation}\label{e:comm1}
[E_{i,i+1},F_{j+1,j}]=\delta_{ij}(r^{-1}-s^{-1})(K_{i+1}^{-}(K_i^{-})^{-1}-K_{i+1}^{+}(K_i^{+})^{-1}).
\end{equation}
\end{proposition}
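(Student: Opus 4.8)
The plan is to isolate a single scalar component of the mixed RTT relation (\ref{generating relation2}) and feed in the Gauss decomposition (\ref{Gauss-dec1})--(\ref{Gauss-dec2}), in the spirit of Ding--Frenkel. Writing (\ref{generating relation2}) out in matrix entries, the coefficient of $e_{ac}\otimes e_{bd}$ gives, for all $a,b,c,d$,
\[
\sum_{u,v}(R^{(+)})^{ab}_{uv}\,l^{+}_{uc}l^{-}_{vd}=\sum_{p,q}l^{-}_{bq}\,l^{+}_{ap}\,(R^{(+)})^{pq}_{cd}.
\]
Since $R^{(+)}$ is block triangular with only the diagonal weights $s,\,rs,\,1$ and the single off-diagonal weight $s-r$ (in position ${}^{\,ij}_{\,ji}$ with $i<j$), each such component is a short relation among the $l^{\pm}$'s. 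For the diagonal case $i=j$ of the Proposition I would take the component $(a,b,c,d)=(i,i+1,i+1,i)$, which reads
\[
rs\,l^{+}_{i,i+1}l^{-}_{i+1,i}+(s-r)\,l^{+}_{i+1,i+1}l^{-}_{i,i}=l^{-}_{i+1,i}l^{+}_{i,i+1}+(s-r)\,l^{-}_{i+1,i+1}l^{+}_{i,i}.
\]

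Next I would substitute $l^{\pm}_{ii}=K^{\pm}_i$, $l^{+}_{i,i+1}=K^{+}_iE_{i,i+1}$ and $l^{-}_{i+1,i}=F_{i+1,i}K^{-}_i$ from (\ref{Gauss-dec1})--(\ref{Gauss-dec2}), and then normalize the two mixed products $K^{+}_iE_{i,i+1}F_{i+1,i}K^{-}_i$ and $F_{i+1,i}K^{-}_iK^{+}_iE_{i,i+1}$ to the common form $(\,\cdot\,)K^{+}_iK^{-}_i$ by sweeping every $K$ to the right. This uses only Propositions \ref{prop4.2}, \ref{prop4.3} and \ref{prop4.4}: all the $K$'s commute with one another, and $K^{+}_iE_{i,i+1}=rE_{i,i+1}K^{+}_i$, $K^{-}_iE_{i,i+1}=sE_{i,i+1}K^{-}_i$, $K^{+}_iF_{i+1,i}=r^{-1}F_{i+1,i}K^{+}_i$, $K^{-}_iF_{i+1,i}=s^{-1}F_{i+1,i}K^{-}_i$. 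The $r$'s and $s$'s then conspire so that the two mixed products become $rs\,(E_{i,i+1}F_{i+1,i})\,K^{+}_iK^{-}_i$ and $rs\,(F_{i+1,i}E_{i,i+1})\,K^{+}_iK^{-}_i$ respectively, and the component relation collapses to
\[
rs\,\bigl(E_{i,i+1}F_{i+1,i}-F_{i+1,i}E_{i,i+1}\bigr)K^{+}_iK^{-}_i=(s-r)\bigl(K^{-}_{i+1}K^{+}_i-K^{+}_{i+1}K^{-}_i\bigr).
\]
Multiplying on the right by $(K^{+}_iK^{-}_i)^{-1}$, using that the $K$'s all commute, and noting $(s-r)(rs)^{-1}=r^{-1}-s^{-1}$, yields exactly (\ref{e:comm1}) in the case $i=j$.

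For $i\neq j$ the same device applies to a different component: taking, say for $i<j$, $(a,b,c,d)=(i,j+1,i+1,j)$ (and the mirror choice when $i>j$), the only off-diagonal term that could appear multiplies one of $l^{+}_{j+1,i+1}$ or $l^{-}_{i,j}$, both of which vanish by the triangularity $l^{+}_{ab}=l^{-}_{ba}=0$ for $a>b$; the opposite side is purely diagonal. Hence the component is just a scalar relation $l^{+}_{i,i+1}l^{-}_{j+1,j}=c\,l^{-}_{j+1,j}l^{+}_{i,i+1}$ with $c\in\{r^{\pm1},s^{\pm1}\}$, and after clearing the $K$'s (again by Propositions \ref{prop4.2}--\ref{prop4.4}; in the relevant index ranges $K^{-}_jE_{i,i+1}=E_{i,i+1}K^{-}_j$ and $K^{+}_iF_{j+1,j}=F_{j+1,j}K^{+}_i$) this factor cancels against the matching $K$-commutation, leaving $[E_{i,i+1},F_{j+1,j}]=0$. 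I expect the only genuinely delicate point to be the bookkeeping in the $i=j$ case: making sure each $K$ is pushed past $E_{i,i+1}$ and $F_{i+1,i}$ with the correct power of $r$ or $s$ so that the two mixed products truly combine over the single common factor $K^{+}_iK^{-}_i$; everything else is forced by Propositions \ref{prop4.2}--\ref{prop4.4} and the shape of $R^{(+)}$.
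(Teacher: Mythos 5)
Your proposal is correct and takes essentially the same approach as the paper: the paper's proof likewise extracts the $(i,i+1,i+1,i)$ component of the mixed relation (\ref{generating relation2}), substitutes the Gauss decomposition, and clears the $K$'s via Propositions \ref{prop4.3}--\ref{prop4.4} to arrive at (\ref{e:comm1}). The paper only writes out the case $i=j$; your additional sketch for $i\neq j$ is sound in mechanism, except that for $|i-j|=1$ the parenthetical identities $K^{-}_jE_{i,i+1}=E_{i,i+1}K^{-}_j$ and $K^{+}_iF_{j+1,j}=F_{j+1,j}K^{+}_i$ do not hold (one of them instead produces a factor $r$ or $s^{-1}$ by Propositions \ref{prop4.3}--\ref{prop4.4}), and it is exactly that factor which cancels the scalar coming from $R^{(+)}$, as you asserted.
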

\begin{proof}
It follows from (\ref{generating relation2}) that
\begin{equation}\label{Eq1_proof3.6}
rsl^{+}_{i,i+1}l^{-}_{i+1,i}+(s-r)l^{+}_{i+1,i+1}l^{-}_{i,i}=
l^{-}_{i+1,i}l^{+}_{i,i+1}+(s-r)l^{-}_{i+1,i+1}l^{+}_{i,i}.
\end{equation}
By Gauss decomposition of $L^{\pm}$
both sides of ({\ref{Eq1_proof3.6}}) can be written as
\begin{align*}%\label{Eq2_proof3.6}
rsK_{i}^{+}E_{i,i+1}F_{i+1,i}K^{-}_{i}+(s-r)K_{i+1}^{+}K_{i}^{-}
=F_{i+1,i}K^{-}_{i}K_{i}^{+}E_{i,i+1}+(s-r)K_{i+1}^{-}K_{i}^{+}.
\end{align*}

 Taking account of Eqs. (\ref{Eq4_prop4.3}) and (\ref{Eq1_prop4.4}), we see that the above is reduced to
 (\ref{e:comm1}).
\end{proof}

The analog of Serre relations is given below for $E_{ij}$.
\begin{proposition}\label{prop4.6}
In $U(R)$ we have that
\begin{align}\label{Eq1_prop4.6}
&E_{i,i+1}^{2}E_{i+1,i+2}-(r+s)E_{i,i+1}E_{i+1,i+2}E_{i,i+1}+rsE_{i+1,i+2}E_{i,i+1}^2=0,\\
\label{Eq2_prop4.6}
&E_{i,i+1}E_{i+1,i+2}^2-(r+s)E_{i+1,i+2}E_{i,i+1}E_{i+1,i+2}+rsE_{i+1,i+2}^{2}E_{i,i+1}=0,\\
&E_{i,i+1}E_{j,j+1}=E_{j,j+1}E_{i,i+1}, \qquad\mbox{if $|i-j|>1$}.
\end{align}
\end{proposition}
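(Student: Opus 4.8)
The plan is to derive the quantum Serre relations directly from the RTT relations \eqref{generating relation1} together with the Gauss decomposition \eqref{Gauss-dec1}, in exactly the spirit of Propositions \ref{prop4.3}--\ref{prop4.5}. The point is that the generator $E_{i,i+1}$ is, up to the invertible $K$-factors, the Gauss coordinate attached to $l^+_{i,i+1}$, and $E_{i+1,i+2}$ to $l^+_{i+1,i+2}$; their Serre relation should therefore come out of the $U(R)$-relations among the entries $l^+_{ab}$ with $a,b\in\{i,i+1,i+2\}$, that is, from the $3\times 3$ submatrix problem. Since the R-matrix is "local" in the sense that the relation among $l^+_{ab}, l^+_{cd}$ only involves indices in $\{a,b,c,d\}$, it suffices to treat the case $n=3$, $i=1$, and the commuting case $|i-j|>1$ is immediate since then the relevant index sets are disjoint and one reads off $l^+_{i,i+1}l^+_{j,j+1}=l^+_{j,j+1}l^+_{i,i+1}$ from \eqref{generating relation1}, which passes to $E_{i,i+1}E_{j,j+1}=E_{j,j+1}E_{i,i+1}$ after absorbing the $K$'s (which commute by Proposition \ref{prop4.2} and act diagonally).

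For the genuine Serre relations I would proceed as follows. First, extract from \eqref{generating relation1} the full list of quadratic relations among $l^+_{11},l^+_{12},l^+_{13},l^+_{22},l^+_{23},l^+_{33}$; the relevant ones express each product $l^+_{ab}l^+_{cd}$ in "normal order" and include the key relations $l^+_{12}l^+_{13}=r^{-1}l^+_{13}l^+_{12}$, $l^+_{13}l^+_{23}=r^{-1}l^+_{23}l^+_{13}$, and the crossing relation $l^+_{12}l^+_{23}-r^{-1}l^+_{23}l^+_{12}=(\text{const})\,l^+_{13}l^+_{22}$ (the precise constant being read from the $(s-r)$-term in $R^{(+)}$). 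Next, substitute the Gauss decomposition: $l^+_{12}=K^+_1E_{12}$, $l^+_{23}=K^+_2E_{23}$, $l^+_{13}=K^+_1E_{13}$, $l^+_{11}=K^+_1$, $l^+_{22}=K^+_2$. Using Proposition \ref{prop4.3} to move the $K^+_i$'s past the $E$'s, every relation becomes one purely among $E_{12},E_{23},E_{13}$ after cancelling invertible $K$'s; in particular one obtains an expression for $E_{13}$ in terms of $E_{12},E_{23}$ (the "quantum root vector" identity, e.g. $E_{13}=E_{12}E_{23}-r^{-1}E_{23}E_{12}$ up to normalization), and two further relations of the form $E_{12}E_{13}=\alpha E_{13}E_{12}$, $E_{13}E_{23}=\beta E_{23}E_{13}$. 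Finally, substituting the expression for $E_{13}$ into these two relations and clearing denominators yields precisely \eqref{Eq1_prop4.6} and \eqref{Eq2_prop4.6}, the $(r,s)$-Serre relations with structure constants $r+s$ and $rs$; the Hecke relation $(\widehat{R}^{(+)}-s)(\widehat{R}^{(+)}+r)=0$ is the conceptual reason these are the only two parameters appearing.

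The main obstacle is bookkeeping: one must correctly list all the needed relations from \eqref{generating relation1} in the $n=3$ case (there are several, of three types — $q$-commuting pairs, fully commuting pairs, and one or two genuine three-term relations), keep straight which $K^+_i$ commutes past which $E$ with which scalar ($r$, $s$, or $1$) according to Proposition \ref{prop4.3}, and verify that after all substitutions the spurious terms cancel so that only the cubic Serre expression survives. A secondary subtlety is checking that it genuinely suffices to work in rank $3$: this follows because for indices $i,i+1,i+2$ all entries of $R^{(+)}$ coupling the corresponding matrix units have support inside $\{i,i+1,i+2\}$, so the sub-bialgebra generated by $l^\pm_{ab}$ with $a,b\in\{i,i+1,i+2\}$ is a homomorphic image of the $n=3$ algebra and the Gauss coordinates restrict correctly. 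Granting that, the computation is a routine but careful elimination, and I would present only the rank-$3$ case with the remark that the general case is identical after relabeling.
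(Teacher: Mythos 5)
Your plan is correct and follows essentially the same route as the paper: the paper likewise extracts from the relation \eqref{generating relation1} the identity $E_{i+1,i+2}E_{i,i+1}=s^{-1}E_{i,i+1}E_{i+1,i+2}+(r^{-1}-s^{-1})E_{i,i+2}$ (your ``quantum root vector'' identity for $E_{i,i+2}$) together with $E_{i,i+1}E_{i,i+2}=rE_{i,i+2}E_{i,i+1}$, using the Gauss decomposition and Proposition \ref{prop4.3} to cancel the $K$'s, and then eliminates $E_{i,i+2}$ to obtain \eqref{Eq1_prop4.6}, with \eqref{Eq2_prop4.6} and the $|i-j|>1$ case handled analogously. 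The only remaining work in your version is the routine verification of the exact scalars, which the paper carries out explicitly.
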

\begin{proof}
We first prove the following commutation relation.
\begin{equation}\label{Eq1_lemma3.7}
E_{i+1,i+2}E_{i,i+1}=s^{-1}E_{i,i+1}E_{i+1,i+2}+(r^{-1}-s^{-1})E_{i,i+2}.
\end{equation}

In fact, from the defining relation (\ref{generating relation1}) it follows that
\begin{equation*}
rsl^{+}_{i,i+1}l^{+}_{i+1,i+2}+(s-r)l^{+}_{i+1,i+1}l^{+}_{i,i+2}=
rsl^{+}_{i+1,i+2}l^{+}_{i,i+1},
\end{equation*}
Plugging in the Gauss decomposition, this becomes
\begin{equation}\label{Eq2_proof3.7}
rsK_{i}^{+}E_{i,i+1}K^{+}_{i+1}E_{i+1,i+2}+(s-r)K_{i+1}^{+}K_{i}^{+}E_{i,i+2}=rsK^{+}_{i+1}E_{i+1,i+2}K_{i}^{+}E_{i,i+1}.
\end{equation}
Then Eq. (\ref{Eq1_lemma3.7}) is obtained by using Eqs. (\ref{Eq2_prop4.3}) and (\ref{Eq3_prop4.3}).

Multiplying $E_{i,i+1}$ from the left and the right of (\ref{Eq1_lemma3.7}), we have that
\begin{align*}
E_{i,i+1}^{2}E_{i+1,i+2}&=sE_{i,i+1}E_{i+1,i+2}E_{i,i+1}-(r^{-1}s-1)E_{i,i+1}E_{i,i+2},\\
rsE_{i+1,i+2}E_{i,i+1}^{2}&=rE_{i,i+1}E_{i+1,i+2}E_{i,i+1}+(s-r)E_{i,i+2}E_{i,i+1}.
\end{align*}
On the other hand, the defining relation (\ref{generating relation1}) also gives that
\begin{equation*}
E_{i,i+1}E_{i,i+2}=rE_{i,i+2}E_{i,i+1}.
\end{equation*}
Then (\ref{Eq1_prop4.6}) immediately follows. Eq. (\ref{Eq2_prop4.6}) is proved similarly.
\end{proof}

In the same way we obtain the Serre relations for $F_{ij}$.
\begin{proposition}\label{prop4.7}
In $U(R)$ we have
\begin{align}
&F_{i+1,i}^{2}F_{i+2,i+1}-(r^{-1}+s^{-1})F_{i+1,i}F_{i+2,i+1}F_{i+1,i}+r^{-1}s^{-1}F_{i+2,i+1}F_{i+1,i}^2=0,\\
&F_{i+1,i}F_{i+2,i+1}^2-(r^{-1}+s^{-1})F_{i+2,i+1}F_{i+1,i}F_{i+2,i+1}+r^{-1}s^{-1}F_{i+2,i+1}^{2}F_{i+1,i}=0,\\
&F_{i+1,i}F_{j+1,j}=F_{j+1,j}F_{i+1,i}, \qquad\mbox{if $|i-j|>1$}.
\end{align}
\end{proposition}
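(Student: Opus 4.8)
The plan is to run the argument of Proposition~\ref{prop4.6} with $L^{-}$ in place of $L^{+}$ and the lower-triangular Gauss factorization (\ref{Gauss-dec2}) in place of (\ref{Gauss-dec1}).

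First I would read off from the matrix identity $R^{(+)}L^{-}_1L^{-}_2=L^{-}_2L^{-}_1R^{(+)}$ (relation (\ref{generating relation1}) applied to $L^{-}$) the scalar component relating the entries $l^{-}_{i+1,i},l^{-}_{i+2,i+1},l^{-}_{i+1,i+1},l^{-}_{i+2,i}$. Writing $R^{(+)}=\sum (R^{(+)})^{ab}_{cd}\,e_{ac}\otimes e_{bd}$ and picking the $(i+1,i+2;\,i,i+1)$-component, the block-triangularity of $R^{(+)}$ together with the vanishing $l^{-}_{pq}=0$ for $q>p$ leave only
\begin{equation*}
rs\,l^{-}_{i+1,i}l^{-}_{i+2,i+1}+(s-r)\,l^{-}_{i+2,i}l^{-}_{i+1,i+1}=rs\,l^{-}_{i+2,i+1}l^{-}_{i+1,i}.
\end{equation*}
Substituting the Gauss decomposition $l^{-}_{ji}=F_{ji}K^{-}_i$ (with $F_{ii}=1$) and commuting the $K^{-}$'s past the $F$'s via Propositions~\ref{prop4.4} and \ref{prop4.2}, the Cartan factors cancel, producing the analog of (\ref{Eq1_lemma3.7}),
\begin{equation*}
F_{i+2,i+1}F_{i+1,i}=r\,F_{i+1,i}F_{i+2,i+1}+(1-rs^{-1})\,F_{i+2,i}.
\end{equation*}

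Next, exactly as in Proposition~\ref{prop4.6}, I would multiply this identity by $F_{i+1,i}$ on the left and then on the right, obtaining two relations that express $F_{i+1,i}F_{i+2,i+1}F_{i+1,i}$ and $F_{i+2,i+1}F_{i+1,i}^{2}$ in terms of $F_{i+1,i}^{2}F_{i+2,i+1}$ and of the products $F_{i+1,i}F_{i+2,i}$, $F_{i+2,i}F_{i+1,i}$. The one remaining input is the commutation $F_{i+1,i}F_{i+2,i}=s^{-1}F_{i+2,i}F_{i+1,i}$, deduced from the $(i+1,i+2;\,i,i)$-component of $R^{(+)}L^{-}_1L^{-}_2=L^{-}_2L^{-}_1R^{(+)}$ together with the commutation of $K^{-}_i$ with the non-simple vector $F_{i+2,i}$, which itself comes from (\ref{generating relation1}) in the same way as Proposition~\ref{prop4.4}. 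Eliminating $F_{i+2,i}$ among the three relations yields
\begin{equation*}
F_{i+2,i+1}F_{i+1,i}^{2}=(r+s)\,F_{i+1,i}F_{i+2,i+1}F_{i+1,i}-rs\,F_{i+1,i}^{2}F_{i+2,i+1},
\end{equation*}
and dividing by $rs$ gives the first Serre relation of the Proposition. The second relation, involving $F_{i+2,i+1}^{2}$, follows from the mirror manipulation---multiplying the basic relation by $F_{i+2,i+1}$ on both sides---just as (\ref{Eq2_prop4.6}) was obtained from (\ref{Eq1_prop4.6}). Finally, for $|i-j|>1$ the four indices $i,i+1,j,j+1$ are pairwise distinct, so the relevant component of $R^{(+)}L^{-}_1L^{-}_2=L^{-}_2L^{-}_1R^{(+)}$ only involves the diagonal blocks of $R^{(+)}$ (the off-diagonal contributions carry entries $l^{-}_{pq}$ with $q>p$, which vanish), giving $l^{-}_{i+1,i}l^{-}_{j+1,j}=l^{-}_{j+1,j}l^{-}_{i+1,i}$; passing to the Gauss generators with Proposition~\ref{prop4.2} and the $K$--$F$ commutations of Proposition~\ref{prop4.4} (extended to the far-away case in the same manner) then gives $F_{i+1,i}F_{j+1,j}=F_{j+1,j}F_{i+1,i}$.

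I expect the only real difficulty to be bookkeeping: one must isolate precisely the correct components of $R^{(+)}L^{-}_1L^{-}_2=L^{-}_2L^{-}_1R^{(+)}$, keep track of which internal sums survive the block-triangularity of $R^{(+)}$ and the lower-triangularity of $L^{-}$, and carry the inverted parameters $r^{-1},s^{-1}$ correctly while moving the $K^{-}_i$ through the $F$'s. Conceptually each step is the lower-triangular mirror of one already carried out for the $E$'s in Propositions~\ref{prop4.3}, \ref{prop4.5} and \ref{prop4.6}.
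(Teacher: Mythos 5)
Your proposal is correct and follows exactly the route the paper intends: the paper proves Proposition \ref{prop4.7} simply by mirroring the proof of Proposition \ref{prop4.6} for the lower-triangular factor, which is precisely what you carry out (and your intermediate relations $F_{i+2,i+1}F_{i+1,i}=rF_{i+1,i}F_{i+2,i+1}+(1-rs^{-1})F_{i+2,i}$ and $F_{i+1,i}F_{i+2,i}=s^{-1}F_{i+2,i}F_{i+1,i}$ do check out against the relevant components of $R^{(+)}L^{-}_1L^{-}_2=L^{-}_2L^{-}_1R^{(+)}$). No substantive difference from the paper's argument.
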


\section{Isomorphism between the quantum group $U_{r,s}(\mathfrak{gl}_n)$ and $U(R)$}
The current version of two-parameter
 quantum group was given by Benkart and Witherspoon \cite{BW1} in terms of Chevalley generators and
 Serre relations
in connection with the down-up algebras.
In \cite{BW2,BW3} they further developed the representation theory
of the two-parameter quantum general and special linear algebras
and constructed the corresponding $R$-matrix. We now identify their version
with our FRT version given in earlier sections.

Let $\epsilon_1$, $\epsilon_2, \ldots, \epsilon_n$ be the orthonormal basis of the Euclidean space $\mathbb{C}^n$ with inner product
$\langle \ \ , \ \ \rangle$. Let $\Pi=\{\alpha_j=\epsilon_j-\epsilon_{j+1}| j=1,2,...n-1\}$ and $\Phi=\{\epsilon_i-\epsilon_j|1\leq i\neq j\leq n\}$.
Then $\Phi$ realizes the root system of type $A_{n-1}$ with $\Pi$ a base of simple roots.

\begin{definition} The algebra
$U_{r,s}(\mathfrak{gl}_n)$ is a unital associated algebra over $\mathbb{C}$ generated by
$e_j$, $f_j$, $(1\leq j< n)$, and $a_i^{\pm 1}$, $b_i^{\pm 1}$ $(1\leq i\leq n)$ subject to the following relations.
\begin{description}
  \item[R1] $a_i^{\pm 1}$, $b_j^{\pm 1}$ $(1\leq i\leq n)$ commutate with each other and
  $a_ia_i^{-1}=b_ib_i^{-1}=1$;

  \item[R2] $a_ie_j=r^{\langle \epsilon_i,\alpha_j\rangle}e_ja_i$, and $a_if_j=r^{-\langle \epsilon_i,\alpha_j\rangle}f_ja_i$;
  \item[R3] $b_ie_j=s^{\langle \epsilon_i,\alpha_j\rangle}e_jb_i$, and $b_if_j=s^{-\langle \epsilon_i,\alpha_j\rangle}f_jb_i$;

  \item[R4] $[e_i,f_j]=\frac{\delta_{ij}}{r-s}(a_ib_{i+1}-a_{i+1}b_i)$;
  \item[R5] $[e_i,e_j]=[f_i,f_j]=0$ if $|i-j|>1$;
  \item[R6] $e_i^2e_{i+1}-(r+s)e_ie_{i+1}e_i+rse_{i+1}e_i^2=0$,\\
            $e_{i+1}^2e_{i}-(r+s)e_{i+1}e_{i}e_{i+1}+rse_{i}e_{i+1}^2=0$,
  \item[R7] $f_i^2f_{i+1}-(r^{-1}+s^{-1})f_if_{i+1}f_i+r^{-1}s^{-1}f_{i+1}f_i^2=0$;\\
            $f_{i+1}^2f_{i}-(r^{-1}+s^{-1})f_{i+1}f_{i}f_{i+1}+r^{-1}s^{-1}f_{i}f_{i+1}^2=0$.

\end{description}
\end{definition}

The algebra $U_{r,s}(\mathfrak{gl}_n)$ is a Hopf algebra such that
$a_i^{\pm 1}$, $b_i^{\pm 1}$ are the group-like
elements and the remaining Hopf algebra structure is given by
\begin{equation*}
\delta(e_i)=e_i\otimes 1+\omega_i\otimes e_i,
\end{equation*}
\begin{equation*}
\delta(f_i)=f_i\otimes \omega'_i+1\otimes f_i,
\end{equation*}
\begin{equation*}
\varepsilon(e_i)=\varepsilon(f_i)=0,
\end{equation*}
\begin{equation*}
S(e_i)=-\omega^{-1}_ie_i,
\end{equation*}
\begin{equation*}
S(f_i)=-f_i\omega'^{-1}_i.
\end{equation*}
\begin{remark}
When $r=q=s^{-1}$, the Hopf algebra $U_{r,s}(\mathfrak{gl}_n)$ modulo the ideal generated by
$b_i-a_i^{-1}$, $1\leq i\leq n$ is isomorphic to $U_q(\mathfrak{gl}_n)$.
\end{remark}

Analogous with the one-parameter case, we have the following theorem.
\begin{theorem}
The mapping $\psi: U_{r,s}(\mathfrak{gl}_n)\rightarrow U(R)$ given by
\begin{align}
e_i\mapsto \frac{r}{r-s}E_{i,i+1}, f_i\mapsto \frac{s}{s-r}F_{i+1,i}, a_1\mapsto K_1^{+}, b_1\mapsto K_1^{-},
\end{align}
and
\begin{align}
a_i&\mapsto K_i^{+}\prod_{l=1}^{i-1}(K_{i-l}^{+}K_{i-l}^{-})^{(-1)^l},\\
b_i&\mapsto K_i^{-}\prod_{l=1}^{i-1}(K_{i-l}^{+}K_{i-l}^{-})^{(-1)^l},
\end{align}
is an isomorphism.
\end{theorem}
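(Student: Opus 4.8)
The plan is to verify in turn that $\psi$ is a well-defined algebra homomorphism, that it is surjective, and that it is injective; the third step is the real difficulty.

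\emph{Homomorphism.} To see that $\psi$ respects the relations it suffices to check that the images of $e_i,f_i,a_i^{\pm1},b_i^{\pm1}$ satisfy (R1)--(R7). Relations (R1), (R5), (R6) and (R7) are immediate from Section~4: (R1) holds because the $K_i^{\pm}$ pairwise commute and are invertible (Proposition~\ref{prop4.2}); (R5) is the last identity of Propositions~\ref{prop4.6} and~\ref{prop4.7}; and (R6), (R7) follow from the first two identities of Propositions~\ref{prop4.6} and~\ref{prop4.7} respectively, since $\psi(e_i)$ and $\psi(f_i)$ differ from $E_{i,i+1}$ and $F_{i+1,i}$ only by scalars and the Serre relations are homogeneous. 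For (R2)--(R3) I would push each factor of $\psi(a_i)=K_i^{+}\prod_{l=1}^{i-1}(K_{i-l}^{+}K_{i-l}^{-})^{(-1)^l}$ past $E_{j,j+1}$ using Proposition~\ref{prop4.3} (and Proposition~\ref{prop4.4} for $F_{j+1,j}$); a short computation using the opposite signs $(-1)^{i-m}=-(-1)^{i-m-1}$ carried by consecutive indices shows that all contributed scalars telescope to precisely $r^{\langle\epsilon_i,\alpha_j\rangle}$, respectively $s^{\langle\epsilon_i,\alpha_j\rangle}$. For (R4) the same bookkeeping yields $\psi(a_i)\psi(b_{i+1})=K_{i+1}^{-}(K_i^{-})^{-1}$ and $\psi(a_{i+1})\psi(b_i)=K_{i+1}^{+}(K_i^{+})^{-1}$, and the coefficient identity $\frac{r}{r-s}\cdot\frac{s}{s-r}\cdot(r^{-1}-s^{-1})=\frac{1}{r-s}$ then combines Proposition~\ref{prop4.5} with the prefactors of $\psi(e_i),\psi(f_j)$ to give exactly $\frac{\delta_{ij}}{r-s}\,\psi(a_ib_{i+1}-a_{i+1}b_i)$.

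\emph{Surjectivity.} The algebra $U(R)$ is generated by the $l_{ij}^{\pm}$, hence by the Gauss decomposition (\ref{Gauss-dec1})--(\ref{Gauss-dec2}) it is generated by the $K_i^{\pm}$ together with all $E_{ij}$, $F_{ji}$ for $i<j$; and by the straightening relation (\ref{Eq1_lemma3.7}) and its analogues each $E_{ij}$ (resp.\ $F_{ji}$) with $j>i+1$ lies in the subalgebra generated by the $E_{k,k+1}$ (resp.\ $F_{k+1,k}$). So $U(R)$ is generated by $\{K_i^{\pm},E_{k,k+1},F_{k+1,k}\}$. Now $E_{k,k+1}$ and $F_{k+1,k}$ are nonzero scalar multiples of $\psi(e_k)$ and $\psi(f_k)$, and $K_i^{\pm}\in\operatorname{Im}\psi$ by induction on $i$: one has $K_1^{\pm}=\psi(a_1),\psi(b_1)$, and since $\psi(a_i)=K_i^{+}P_i$ and $\psi(b_i)=K_i^{-}P_i$ with the same $P_i=\prod_{l=1}^{i-1}(K_{i-l}^{+}K_{i-l}^{-})^{(-1)^l}$, which lies in $\operatorname{Im}\psi$ by the inductive hypothesis, one recovers $K_i^{\pm}=\psi(a_i)P_i^{-1},\psi(b_i)P_i^{-1}$. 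Hence $\psi$ is onto.

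\emph{Injectivity, the main obstacle.} By surjectivity $\psi$ induces $U_{r,s}(\mathfrak{gl}_n)/\ker\psi\cong U(R)$, so everything reduces to $\ker\psi=0$. I would filter both algebras by assigning degree one to $e_i,f_i$ (resp.\ to $E_{k,k+1},F_{k+1,k}$) and degree zero to $a_i^{\pm1},b_i^{\pm1}$ (resp.\ to $K_i^{\pm1}$), so that $\psi$ becomes a surjective filtered map, and reduce to showing that $\operatorname{gr}\psi$ is injective. On the Drinfeld--Jimbo side $U_{r,s}(\mathfrak{gl}_n)=U^{-}U^{0}U^{+}$ carries the standard PBW basis in the root vectors built from $e_i,f_i$ (Benkart--Witherspoon). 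The relations collected in Section~4 show that $U(R)$ admits a triangular factorization as well and that any monomial in the $F_{ji}$, $K_i^{\pm}$, $E_{ij}$ can be straightened into an ordered one, producing a spanning set that $\psi$ matches with the PBW monomials up to nonzero scalars. The point that must still be established — and this is the genuine difficulty — is that the straightening does not collapse further, i.e.\ that the RTT relations (\ref{generating relation1})--(\ref{generating relation2}) impose \emph{exactly} the Section~4 relations among the Gauss generators; this is the two-parameter counterpart of the faithfulness of the FRT presentation proved by Ding--Frenkel in the one-parameter case, and can alternatively be deduced from the non-degeneracy of the Hopf pairing between $U(R)$ and $Fun(GL_{r,s}(n))$, with which $\psi$ is compatible. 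A more computational route to the same conclusion is to build the inverse $U(R)\to U_{r,s}(\mathfrak{gl}_n)$ directly, sending each $l_{ij}^{\pm}$ to the Gauss-type expression in the Chevalley generators dictated by (\ref{Gauss-dec1})--(\ref{Gauss-dec2}) and verifying (\ref{generating relation1})--(\ref{generating relation2}) on generators — routine but lengthy.
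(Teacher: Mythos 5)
Your proposal follows essentially the same route as the paper: the relations (R1), (R5)--(R7) come from Propositions \ref{prop4.2}, \ref{prop4.6}, \ref{prop4.7}, the telescoping check of (R2)--(R4) uses Propositions \ref{prop4.3}--\ref{prop4.5} exactly as in the paper's computation, surjectivity follows from generation by the Gauss generators, and injectivity is deferred to the Ding--Frenkel argument, which is precisely what the paper does (it cites \cite{DF} verbatim). Your write-up is in fact slightly more explicit than the paper's on two points — the inductive recovery of $K_i^{\pm}$ from the images of $a_i,b_i$, and the acknowledgement that the faithfulness step behind injectivity still needs the one-parameter argument adapted — but the substance is the same.
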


\begin{proof}
By Propositions \ref{prop4.2}, \ref{prop4.6} and \ref{prop4.7}, relations R1, R5, R6, and R7 hold.
We only need to check relations R2, R3, and R4.

First let consider R2.
It follows from Proposition \ref{prop4.3} that
\begin{equation}
\begin{aligned}
K_i^{+}\prod_{l=1}^{i-1}(K_{i-l}^{+}K_{i-l}^{-})^{(-1)^l}
E_{i,i+1}&=K_i^{+}{E_{i,i+1}}\prod_{l=1}^{i-1}(K_{i-l}^{+}K_{i-l}^{-})^{(-1)^l}\\
&=rE_{i,i+1}K_i^{+}\prod_{l=1}^{i-1}(K_{i-l}^{+}K_{i-l}^{-})^{(-1)^l}.
\end{aligned}
\end{equation}
\begin{equation}
\begin{aligned}
&K_i^{+}\prod_{l=1}^{i-1}(K_{i-l}^{+}K_{i-l}^{-})^{(-1)^l}
E_{i-1,i}\\
=&K_i^{+}(K_{i-1}^{+})^{-1}(K_{i-1}^{-})^{-1}E_{i-1,i}\prod_{l=2}^{i-1}(K_{i-l}^{+}K_{i-l}^{-})^{(-1)^l}\\
=&r^{-1}s^{-1}K_i^{+}E_{i-1,i}(K_{i-1}^{+})^{-1}(K_{i-1}^{-})^{-1}\prod_{l=2}^{i-1}(K_{i-l}^{+}K_{i-l}^{-})^{(-1)^l}\\
=&r^{-1}E_{i-1,i}K_i^{+}\prod_{l=1}^{i-1}(K_{i-l}^{+}K_{i-l}^{-})^{(-1)^l}.
\end{aligned}
\end{equation}
When $j\neq i, i-1$, we have
\begin{equation}
\begin{aligned}
K_i^{+}\prod_{l=1}^{i-1}(K_{i-l}^{+}K_{i-l}^{-})^{(-1)^l}
E_{j,j+1}=E_{j,j+1}K_i^{+}\prod_{l=1}^{i-1}(K_{i-l}^{+}K_{i-l}^{-})^{(-1)^l}.
\end{aligned}
\end{equation}
Similarly we also have that
\begin{align*}
K_i^{+}\prod_{l=1}^{i-1}(K_{i-l}^{+}K_{i-l}^{-})^{(-1)^l}
F_{i+1,i}&=r^{-1}F_{i+1,i}K_i^{+}\prod_{l=1}^{i-1}(K_{i-l}^{+}K_{i-l}^{-})^{(-1)^l},\\
K_i^{+}\prod_{l=1}^{i-1}(K_{i-l}^{+}K_{i-l}^{-})^{(-1)^l}
F_{i,i-1}&=rF_{i,i-1}K_i^{+}\prod_{l=1}^{i-1}(K_{i-l}^{+}K_{i-l}^{-})^{(-1)^l},
\end{align*}
and for $j\neq i,i-1$
\begin{equation*}
K_i^{+}\prod_{l=1}^{i-1}(K_{i-l}^{+}K_{i-l}^{-})^{(-1)^l}
F_{j+1,i}=F_{j+1,j}K_i^{+}\prod_{l=1}^{i-1}(K_{i-l}^{+}K_{i-l}^{-})^{(-1)^l}.
\end{equation*}
Then relation R2 is satisfied. Relation R3 is proved similarly.

Next we consider relation R4. It from Proposition \ref{prop4.5} that
$$\frac{r}{r-s}\frac{s}{s-r}[E_{i,i+1},F_{i+1,i}]=\frac{1}{r-s}(K_{i+1}^{-}(K_i^{-})^{-1}-K_{i+1}^{+}(K_i^{+})^{-1}).$$
It is easy to check that
$$K_i^{+}\prod_{l=1}^{i-1}(K_{i-l}^{+}K_{i-l}^{-})^{(-1)^l}K_{i+1}^{-}\prod_{l=1}^{i}(K_{i+1-l}^{+}K_{i+1-l}^{-})^{(-1)^l}=K_{i+1}^{-}(K_i^{-})^{-1},$$
$$K_{i+1}^{+}\prod_{l=1}^{i}(K_{i+1-l}^{+}K_{i+1-l}^{-})^{(-1)^l}K_i^{-}\prod_{l=1}^{i-1}(K_{i-l}^{+}K_{i-l}^{-})^{(-1)^l}=K_{i+1}^{+}(K_i^{+})^{-1}).$$
So R4 holds. Note that all the $E_{ij}$, $F_{ji}$, $i<j$, can be generated by $E_{i,i+1}$, $F_{i+1,i}$. Therefore
$\psi$ is a surjective homomorphism.

The injectivity can be proved verbatim as in \cite{DF} for the one-parameter case.
\end{proof}

\bigskip

\centerline{\bf Acknowledgments}

 NJ gratefully acknowledges the support of
Humboldt Foundation, MPI-Leipzig, Simons Foundation
grant 198129, and NSFC grant  during this work.

\bibliographystyle{amsalpha}

\end{document}